%BP + LS 8/28/12
%BP8/28/12
%CB 8/21/12
%DN 8/21/12
%BP 8.16/12
%CP 8.16/12
%BP + LS 8.12/12
%DN 8/8/12
%DS + CB 8/1/12
%BP + CB 7/5/12
%DN -currently working on this draft
%CB 6/27/12
%BP and LS 6/25/12
%BP and LS 5/31/12
%DN 5/6/12 first notes

\documentclass[11pt,leqno,amscd,amssymb,verbatim, url]{amsart}
\usepackage{amsfonts,amssymb,graphicx}
\usepackage{amsmath,amscd}
\usepackage{hyperref}

\oddsidemargin .2in \evensidemargin .2in \textwidth 6in

%\iffalse
%formatting section and subsection environments

\let\oldsection=\section
%\renewcommand{\section}[1]{\vspace{.18in}\par\noindent\addtocounter{section}{1}\setcounter{subsection}{0}{\bf\thesection\hspace{9pt}#1}}
%\renewcommand{\subsection}[1]{\par\vspace{.18in}\noindent\addtocounter{subsection}{1}\setcounter{equation}{0}{\bf\thesubsection\hspace{9pt}#1}}

%Objects and operators

\newcommand{\gfpr}{G(\mathbb{F}_{q})}

\newcommand{\sG}{{\mathcal G}}

\newcommand{\ind}{\operatorname{ind}}
\newcommand{\Ext}{\operatorname{Ext}}

\newcommand{\opH}{\operatorname{H}}

\newcommand{\Hom}{\operatorname{Hom}}

\newcommand{\res}{{\text{\rm res}}}
\newcommand{\Map}{{\text{\rm Map}}}
\newcommand{\nopH}{\opH}

%Greek letters

\newcommand{\la}{\lambda}

\newcommand{\bN}{{\mathbb Z}}

\theoremstyle{definition}

%Label settings:

%\def\Label#1{\label{#1}{\tt [#1]}}

%%
%% old theorem environments
%%
%\newtheorem{Df}{Definition}[subsection]
%\newtheorem{theorem}[Df]{Theorem}
%\newtheorem{fact}[Df]{Fact}
%\newtheorem{note}[Df]{Note}
%\newtheorem{lemma}[Df]{Lemma}
%\newtheorem{alg}[Df]{Algorithm}
%\newtheorem{remark}[Df]{Remark}
%\newtheorem{example}[Df]{Example}
%\newtheorem{prop}[Df]{Proposition}
%\newtheorem{conj}[Df]{Conjecture} 
%\newtheorem{cor}[Df]{Corollary}

%%
%% cleveref theorem environments
%%
%% Note that each environment must have its own counter, in order for cleveref to get names right.
%% But set all theorem-like counters equal to get them all numbered consecutively (within subsections).
%%
\newtheorem{theorem}{Theorem}[subsection]

\newtheorem{question}{Question}[subsection]
\makeatletter\let\c@question\c@theorem\makeatother

\makeatletter\let\c@fact\c@theorem\makeatother

\makeatletter\let\c@note\c@theorem\makeatother

\newtheorem{lemma}{Lemma}[subsection]
\makeatletter\let\c@lemma\c@theorem\makeatother

\makeatletter\let\c@alg\c@theorem\makeatother

\newtheorem{remark}{Remark}[subsection]
\makeatletter\let\c@remark\c@theorem\makeatother

\newtheorem{example}{Example}[subsection]
\makeatletter\let\c@example\c@theorem\makeatother

\newtheorem{prop}{Proposition}[subsection]
\makeatletter\let\c@prop\c@theorem\makeatother

\makeatletter\let\c@conj\c@theorem\makeatother

\newtheorem{cor}{Corollary}[subsection]
\makeatletter\let\c@cor\c@theorem\makeatother

\makeatletter\let\c@defn\c@theorem\makeatother

%\numberwithin{Df}{subsection}
\numberwithin{equation}{subsection}

%
% cross references
%
\usepackage[capitalise]{cleveref}
%\newcommand{\newrefformat}[2]{}
% Cleveref definitions
\crefname{theorem}{Theorem}{Theorems}
\crefname{fact}{Fact}{Facts}
\crefname{question}{Question}{Question}
\crefname{note}{Note}{Notes}
\crefname{lemma}{Lemma}{Lemmas}
\crefname{alg}{Algorithm}{Algorithms}
\crefname{remark}{Remark}{Remarks}
\crefname{example}{Example}{Examples}
\crefname{prop}{Proposition}{Propositions}
\crefname{conj}{Conjecture}{Conjectures}
\crefname{cor}{Corollary}{Corollaries}
\crefname{defn}{Definition}{Definitions}
\crefname{equation}{}{}

\begin{document}
\title{Bounding Extensions for Finite Groups and Frobenius Kernels}

\author{\sc C.P. Bendel}
\address
{Department of Mathematics, Statistics and Computer Science\\
University of
Wisconsin-Stout \\
Menomonie\\ WI~54751, USA}
\thanks{}
\email{bendelc@uwstout.edu}

\author{\sc D.K. Nakano}
\address
{Department of Mathematics\\ University of Georgia \\
Athens\\ GA~30602, USA}
\thanks{Research of the second author was supported in part by NSF
grant DMS-1002135}
\email{nakano@math.uga.edu}

\author{\sc B.J. Parshall}
\address
{Department of Mathematics\\ University of Virginia \\
Charlottesville\\ VA~22903, USA}
\thanks{Research of the third author was supported in part by NSF
grant DMS-1001900}
\email{ bjp8w@virginia.edu}

\author{\sc C. Pillen}
\address{Department of Mathematics and Statistics \\ University of South
Alabama\\
Mobile\\ AL~36688, USA}
\email{pillen@southalabama.edu}

\author{\sc L.L. Scott}
\address
{Department of Mathematics\\ University of Virginia \\
Charlottesville\\ VA~22903, USA}
\thanks{Research of the fifth author was supported in part by NSF
grant DMS-1001900}
\email{lls2l@virginia.edu}

\author{\sc D. Stewart}
\address
{New College\\ University of Oxford \\
Oxford\\ UK}
\email{david.stewart@new.ox.ac.uk}

%\date{\today}
\subjclass[2000]{Primary 17B56, 17B10; Secondary 13A50}

\begin{abstract} Let $G$ be a simple, simply connected algebraic group 
defined over an algebraically closed field $k$ of positive characteristic $p$. Let $\sigma:G\to G$ be
a strict endomorphism (i.~e., the subgroup $G(\sigma)$ of $\sigma$-fixed points is finite).
Also, let $G_\sigma$ be the scheme-theoretic kernel of $\sigma$, an infinitesimal subgroup of $G$. 
This paper shows that the degree $m$ cohomology $\opH^m(G(\sigma),L)$ of any irreducible $kG(\sigma)$-module $L$ is bounded by a constant depending on the root system $\Phi$ of $G$ and the integer $m$. A similar result
holds for the degree $m$ cohomology of $G_\sigma$.   These bounds are actually established for
the degree $m$ extension groups $\Ext^m_{G(\sigma)}(L,L')$ between irreducible $kG(\sigma)$-modules $L,L'$, with again a similar result holding for $G_\sigma$.  In these $\Ext^m$ results, of interest
in their own right, the bounds depend also on $L$, or, more precisely, on length of the $p$-adic expansion of the highest weight
associated to $L$.  All bounds are, nevertheless,
independent of the characteristic $p$. These results extend earlier work of Parshall and Scott for
rational representations of algebraic groups $G$.

We also show that one can find bounds independent of the prime 
for the Cartan invariants of $G(\sigma)$ and $G_{\sigma}$, and even for the lengths
of the underlying PIMs.  These bounds, which depend only on the root system of $G$ and 
the ``height" of $\sigma$, 
provide in a strong way an affirmative answer to a question of Hiss, for the special case of finite groups $G(\sigma)$ of Lie 
type in the defining characteristic. 
\end{abstract}

\maketitle
\section{Introduction}
\subsection{Overview} Let $H$ be a finite group and let $V$ be a faithful, absolutely irreducible $H$-module over a field $k$ of positive characteristic. It has long been empirically observed  that the dimensions of the $1$-cohomology groups $\opH^1(H,V)$ are small. Formulating appropriate statements to capture this intuition and explain this phenomenon has formed a theme in group theory for the past thirty years. Initially, most of the work revolved around finding bounds for $\dim \opH^1(H,V)$ in terms of $\dim V$ and can be ascribed to Guralnick and his collaborators. One result in this direction occurs in \cite{GH98}: if $H$ is quasi-simple, then \begin{equation}\dim \opH^1(H,V)\leq \frac{1}{2}\dim V.\label{gh}\end{equation} However, for large values of $\dim V$, this bound is expected to be vastly excessive---to such a degree that in the same paper, the authors go so far as to conjecture a universal bound on $\dim \opH^1(H,V)$ is 2 (\cite[Conjecture 2]{GH98}). While the existence of 3-dimensional $\opH^1(H,V)$ were found by Scott \cite{Sco03}, the original question of Guralnick in \cite{Gur86} of finding some universal bound on $\dim\opH^1(H,V)$ remains open, as does the analogous question \cite[Question 12.1]{GKKL07} of finding numbers $d_m>0$ bounding $\dim \opH^m(H,V)$ when $H$ is required to be a finite simple group. However, it currently appears that (much) larger values of $\dim\opH^1(H,V)$ will be soon established\footnote{For $H=PSL(8,p)$  for $p$ large, there are values
$\dim \opH^1(H,V)=469$ with $V$ absolutely irreducible. This is based on computer calculations of
F. Luebeck, as confirmed by a student T. Sprowl of Scott, both using the
method of \cite{Sco03}. Luebeck computed
similarly large dimensions for $F_4(p)$ and $E_6(p)$. Until recently, the largest
known dimension (with faithful absolutely irreducible coefficients) had been $3$.} and that even to this more basic question, the answer will almost certainly be ``no" for any $m>0$.

For this reason, bounds for $\dim \opH^m(H,V)$ which depend on the group $H$ rather than the module $V$ are all the more indispensable.

Recent progress towards this goal was made by Cline, Parshall, and Scott.\footnote{In describing this work, we freely use the standard notation in Section 2.1.} If $G$ is a simple algebraic group over an algebraically
closed field $k$ of
positive characteristic, we call a surjective endomorphism $\sigma:G\to G$ {\it strict} provided the
group $G(\sigma)$ of $\sigma$-fixed points is finite.\footnote{These endomorphisms are studied
extensively by Steinberg \cite{Stein68}, though he did not give them a name.}

\begin{theorem}\label{h1} (\cite[Thm.~7.10]{CPS09}) Let $\Phi$ be a finite irreducible root system. There is a constant $C(\Phi)$ depending only on $\Phi$ with the following property. If $G$ is any simple, simply connected algebraic group over an algebraically closed field $k$ of positive characteristic  with
root system $\Phi$, and if $\sigma:G\to G$ is a strict endomorphism, then $\dim \opH^1(G(\sigma), L)\leq	 C(\Phi)$, for all irreducible $kG(\sigma)$-modules $L$.\end{theorem}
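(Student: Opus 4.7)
The plan is to reduce the computation of $\opH^1(G(\sigma),L)$ to rational cohomology of $G$, a framework where earlier work of Parshall and Scott already yields uniform bounds depending only on $\Phi$ for $\opH^1(G,L(\lambda))$ with $\lambda$ dominant, and more generally for $\Ext^1_G(L(\mu),L(\nu))$.

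First, I would invoke Steinberg's classification of strict endomorphisms to normalize $\sigma$: up to conjugation, $\sigma=F^r\tau$ for the standard Frobenius $F$ and a (possibly trivial) graph automorphism $\tau$, with the Suzuki and Ree cases handled as variants in which $\sigma^2$ is an integral power of Frobenius. By Steinberg's tensor product theorem, every irreducible $kG(\sigma)$-module is the restriction of a unique rational irreducible $G$-module
$$L(\lambda)\cong L(\lambda_0)\otimes L(\lambda_1)^{[1]}\otimes\cdots\otimes L(\lambda_{r-1})^{[r-1]},$$
with each $\lambda_i$ restricted (suitably interpreted in the twisted cases).

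Next comes the key step: passing from $G(\sigma)$-cohomology to $G$-cohomology. The principal tool here is the generic cohomology theorem of Cline-Parshall-Scott-van der Kallen, which guarantees $\opH^1(G(\mathbb{F}_{p^s}),V^{[s]})\cong \opH^1(G,V)$ once $s$ is large relative to the highest weights of $V$. Combined with a Frobenius-twist filtration of the Steinberg factorization of $L(\lambda)$ and standard inflation-restriction considerations, one expresses $\opH^1(G(\sigma),L(\lambda))$ as a sum, of bounded total size, of rational $\Ext^1_G$-groups $\Ext^1_G(L(\mu),L(\nu))$ in which $\mu,\nu$ are restricted weights. Applying the Parshall-Scott bound on $\dim\Ext^1_G(L(\mu),L(\nu))$---which is uniform in $p$ and depends only on $\Phi$---to each summand then yields the desired constant $C(\Phi)$.

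The main obstacle is to guarantee that the number of summands in this reduction is itself controlled by $\Phi$, independent of $r$. A priori $L(\lambda)$ has $r$ tensor factors in its Steinberg decomposition, and $r$ can be arbitrarily large; one therefore needs a weight-theoretic vanishing argument showing that only boundedly many consecutive pairs $(\lambda_i,\lambda_{i+1})$ can contribute nontrivially, most summands being forced to vanish for root-system reasons analogous to those underlying the Parshall-Scott bound. Organizing this cancellation cleanly---so that the count of ``active'' indices depends only on $\Phi$ and not on the length of the $p$-adic expansion of $\lambda$ or on the particular $\sigma$---is the technical heart of the argument, and it is exactly this ingredient that makes the final bound independent both of $p$ and of the module $L$.
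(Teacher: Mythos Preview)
This theorem is not proved in the present paper; it is quoted from \cite{CPS09}. The paper does, however, describe the strategy of that proof (and of the later strengthening in \cite{PS11}): first bound $\dim\Ext^1_G(L,L')$ for the algebraic group, and then invoke a specific comparison theorem of Bendel--Nakano--Pillen \cite[Thm.~5.5]{BNP06} relating $G(\sigma)$-cohomology to $G$-cohomology, together with Sin's explicit calculations \cite{Sin94} for the Ree and Suzuki groups. The paper's own new proof of the more general Theorem~\ref{thm:finitebound} (which recovers this $\opH^1$-bound as the case $m=1$, $e=0$) proceeds differently: it uses generalized Frobenius reciprocity
\[
\Ext^m_{G(\sigma)}(L(\lambda),L(\mu))\cong\Ext^m_G\bigl(L(\lambda),L(\mu)\otimes\ind_{G(\sigma)}^Gk\bigr),
\]
a filtration of $\ind_{G(\sigma)}^Gk$ by modules $\nopH^0(\nu)\otimes\nopH^0(\nu^\star)^{(\sigma)}$, and a weight bound (Theorem~\ref{thm:Gr-weightbound}) showing that only sections with $\nu$ in a $\Phi$-bounded set can contribute.

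Your proposal has the right endpoints---Steinberg's parametrization at the start, the Parshall--Scott $\Ext^1_G$-bound at the end---but the bridge between them is not there. The generic cohomology theorem of CPSvdK says $\opH^1(G(\mathbb F_{p^s}),V)\cong\opH^1(G,V)$ once $s$ is large \emph{relative to the weights of $V$}; here $s=r$ is fixed and the irreducible $L(\lambda)$ you must handle have $\lambda$ ranging over all of $X_r$, so the hypothesis is typically not satisfied, and no amount of twisting repairs this since $L(\lambda)^{(\sigma)}\cong L(\lambda)$ as $G(\sigma)$-modules. The phrase ``Frobenius-twist filtration \ldots\ and standard inflation-restriction'' does not name a workable mechanism either: inflation--restriction needs a normal subgroup, and you have not said which one (the paper uses $G_\sigma\trianglelefteq G$, but then one must control the $G/G_\sigma$-module $\Ext^j_{G_\sigma}(L(\lambda),L(\mu))$, which is precisely the content of Theorem~\ref{thm:Gr-weightbound}). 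Finally, you yourself identify the ``main obstacle''---bounding the number of contributing terms independently of $r$---and then describe what a solution would have to look like without supplying one. That obstacle is real and is exactly what the BNP comparison theorem (or, in this paper, Theorem~\ref{existsAnF}) is designed to overcome; absent such an ingredient, the argument is a plan, not a proof.
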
 

Therefore, the numbers $\dim\opH^1(H,V)$ are universally bounded for all finite groups $H$ of Lie type of 
a fixed Lie rank and all irreducible $kH$-modules $V$ in the defining characteristic.\footnote{In this paper,
we consider the finite groups $G(\sigma)$, where $G$ is a simple, simply connected algebraic group
over an algebraically closed field $k$ of positive characteristic $p$ and $\sigma$ is a strict endomorphism. The passage to semisimple groups is routine and omitted. For a precise definition
of a ``finite group of Lie type," see \cite[Ch. 2]{GLS98}. It is easy to bound the cohomology (in the
defining characteristic) of a simple finite
group $H$ of Lie type
 in terms of the bounds on the dimension of the cohomology of a group $G(\sigma)$ in which $H$
 appears as a section.  More generally, if
$H_2$ is a normal subgroup of a finite group $H_1$, and $H_3:=H_2/N$ for $N\trianglelefteq H_2$ of order
prime to $p$, then, for any fixed $n$ and irreducible $kH_3$-module $L$, $\dim\opH^n(H_3,L)$ is bounded
by a function of the index $[H_1:H_2]$, and the 
maximum of all $\dim\opH^m(H,L')$, where $L'$ ranges over $kH_1$-modules and $m\leq n$. A similar statement holds for $\Ext^n$ for irreducible modules. Taking $H_3=H$ and $H_1=G(\sigma)$, 
questions of cohomology or Ext-bounds for $H$ can be reduced to corresponding bounds for $G(\sigma)$. 
 The argument involves induction from $H_2$ to $H_3$,  and standard
Hochschild-Serre spectral sequence methods. Further details are left to the reader.  In the $\Ext^n$ case, one may be equally interested in the groups $\Ext^n_{\widetilde H}(\widetilde L,\widetilde L')$ where 
$\widetilde H$ is a covering group of the finite simple group $H$ of Lie type and $\widetilde L, \widetilde L'$ are irreducible defining characteristic modules
for $\widetilde H$. In all but finitely many cases, see \cite[Ch. 6]{GLS98}, $\widetilde H$ is a homomorphic
image of $G(\sigma)$ with kernel central and of order prime to $p$, so the discussion above applies
as well in this case to bound $\dim\Ext^n_{\widetilde H}(\widetilde L, \widetilde L')$, using corresponding
bounds for $G(\sigma)$. 
 }   Subsequently, the cross-characteristic case was handled in Guralnick and Tiep in \cite[Thm.~1.1]{GT11}. 
Thus, combining this theorem with Theorem \ref{h1}, there is a constant $C_r$ depending only on the Lie rank $r$ such that for a finite simple group $H$ of Lie type of Lie rank $r$, $\dim\opH^1(H,V)<C_r$, for
all irreducible $H$-modules $V$ over any algebraically closed field (of arbitrary characteristic). It is worth noting that this bound $C_r$ affords an improvement on the bound in display (\ref{gh}) almost all the time, insofar as it is an improvement for all modules whose dimensions are bigger than $C_r/2$.  

In later work \cite[Cor. 5.3]{PS11},  Parshall and Scott proved a stronger result which states that, under the same assumptions, there exists a constant $C'=C'(\Phi)$ bounding the dimension of $\Ext^1_{G(\sigma)}(L,L')$ for all irreducible $kG(\sigma)$-modules (in the defining characteristic).   The proofs of both this $\Ext^1$-result and the above $\opH^1$-result proceed along the similar general lines of finding bounds for the dimension of $\Ext_G^1(L,L')$ and then using  the result \cite[Thm.~5.5]{BNP06} of Bendel, Nakano, and Pillen to relate $G$-cohomology to $G(\sigma)$-cohomology. Specific calculations of Sin \cite{Sin94}
were needed to handle the Ree and Suzuki groups. 

Much more is known in the algebraic group 
case. Recall that the rational irreducible modules for $G$ are parametrized by the set $X^+=X^+(T)$ of dominant weights for $T$ a maximal torus of $G$. For a non-negative integer $e$, let $X_e$ denote the set of $p^e$-restricted dominant weights (thus, $X_0:=\{0\}$).

\begin{theorem}\label{extalg} (\cite[Thm.~7.1, Thm.~5.1]{PS11}) Let $m,e$ be nonnegative integers and $\Phi$ be a finite irreducible root system. There exists a constant $c(\Phi,m,e)$ with the following property. Let $G$  be a simple,
simply connected algebraic group defined over an algebraically closed field $k$ of positive characteristic $p$
with root system $\Phi$. If $\lambda,\nu\in X^+$ with $\lambda\in X_e$, then  
\[
\dim\Ext^m_G(L(\lambda),L(\nu))=\dim\Ext^m_G(L(\nu),L(\lambda))\leq c(\Phi,m,e).
\]
In particular, $\dim \opH^m(G,L(\nu))\leq c(\Phi,m,0)$ for all $\nu\in X^+$.\footnote{In fact, both $m$ and $e$ are known to be necessary when $m>1$ (see \cite{Ste12}).}\end{theorem}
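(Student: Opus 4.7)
The plan is to argue by induction on $m$, with the base case $m=0$ following from the trivial bound $\dim\Hom_G(L(\lambda),L(\nu))\leq 1$. The symmetry assertion $\dim\Ext^m_G(L(\lambda),L(\nu))=\dim\Ext^m_G(L(\nu),L(\lambda))$ I would deduce at once from the standard contravariant duality $\tau$ on the category of rational $G$-modules, which fixes each irreducible $L(\mu)$ and sends $\Ext^m(A,B)$ onto $\Ext^m(B,A)$.

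For the inductive step I would exploit the Lyndon--Hochschild--Serre spectral sequence attached to $G_e\trianglelefteq G$:
\[
E_2^{i,j}=\opH^i(G/G_e,\Ext^j_{G_e}(L(\lambda),L(\nu)))\Longrightarrow \Ext^{i+j}_G(L(\lambda),L(\nu)).
\]
Writing $\nu=\nu_0+p^e\nu_1$ with $\nu_0\in X_e$, Steinberg's tensor product theorem gives $L(\nu)\cong L(\nu_0)\otimes L(\nu_1)^{[e]}$. Since $\lambda\in X_e$, $L(\lambda)|_{G_e}$ remains irreducible; and since $G_e$ acts trivially on any $e$-fold Frobenius twist, one obtains
\[
\Ext^j_{G_e}(L(\lambda),L(\nu))\cong V_j\otimes L(\nu_1)^{[e]},\qquad V_j:=\Ext^j_{G_e}(L(\lambda),L(\nu_0)).
\]
A $p$-free bound $\dim V_j\leq c_1(\Phi,j,e)$ uniform in $\nu_0\in X_e$ should follow from Kempf-type vanishing combined with uniform dimension estimates for baby Verma modules over $G_eT$. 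Identifying $G/G_e$ with $G$ via the Frobenius, the $E_2$-term becomes $\opH^i(G,V_j\otimes L(\nu_1))$, and after filtering $V_j$ by its composition factors and discarding by linkage those summands $L(\mu)\otimes L(\nu_1)$ with no degree-$i$ cohomology, one reduces to bounding finitely many groups $\opH^i(G,L(\mu_k)\otimes L(\nu_1))$ with the $\mu_k$ drawn from a set of size controlled only by $c_1(\Phi,j,e)$.

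The heart of the matter---and the principal obstacle---is the ``atomic'' case $\lambda=0$, namely a $p$-free bound on $\dim\opH^i(G,L(\mu))$ for arbitrary $\mu\in X^+$ and $i\leq m$. I would approach this via the generic cohomology isomorphism of Cline--Parshall--Scott--van der Kallen: outside a bounded region of exceptional weights, Frobenius-twisting $\mu$ stabilizes $\opH^i(G,L(\mu))$ to an $\Ext$-group between finite-dimensional modules of bounded size, and then linkage and translation-functor arguments dispatch the finitely many exceptional configurations. The most technical point is ensuring every estimate is independent of $p$, in particular controlling how Steinberg's theorem interacts with composition factors of $V_j\otimes L(\nu_1)$ whose highest weights depend a priori on $p$; this is the step I expect to demand the bulk of the effort.
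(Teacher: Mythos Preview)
First, note that this theorem is not proved in the present paper at all: it is quoted from \cite[Thm.~7.1, Thm.~5.1]{PS11} and used as a black box throughout. So there is no ``paper's own proof'' to compare against here; the relevant comparison is with the argument in \cite{PS11}, which proceeds via highest weight category theory, standard/costandard filtrations, and the fact (from \cite{AJS94}) that for $p$ sufficiently large the decomposition numbers $[\nabla(\mu):L(\nu)]$ are given by Kazhdan--Lusztig combinatorics and hence are $p$-independent.

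Your proposal has a genuine structural problem. You want to bound $\dim V_j=\dim\Ext^j_{G_e}(L(\lambda),L(\nu_0))$ independently of $p$ via ``Kempf-type vanishing combined with uniform dimension estimates for baby Verma modules over $G_eT$.'' But baby Verma modules have dimension $p^{e|\Phi^+|}$, which blows up with $p$; there is no elementary dimension argument that gives a $p$-free bound here. In fact, a $p$-free bound on $\dim\Ext^j_{G_e}(L(\lambda),L(\nu_0))$ for $\lambda,\nu_0\in X_e$ is precisely the content of Theorem~\ref{thm:boundforFrobeniusKernel} of the present paper, and that theorem is \emph{deduced from} Theorem~\ref{extalg}, not conversely. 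So your reduction is circular relative to the logical structure developed here (and, as far as I know, relative to what is actually in the literature).

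Your treatment of the ``atomic'' case $\lambda=0$ is also too optimistic. Generic cohomology \`a la Cline--Parshall--Scott--van der Kallen tells you that $\opH^i(G,L(\mu)^{(s)})$ stabilises as $s\to\infty$, but the stable value is still a function of $p$ and $\mu$, and the theorem does not by itself give a uniform bound over all $\mu\in X^+$ and all $p$. The actual proof in \cite{PS11} does not go this way: it bounds the composition length of $\nabla(\mu)/L(\mu)$ uniformly in $p$ (for $p$ large, via Lusztig's character formula) and runs an induction through $\Ext^\bullet_G(\Delta,\nabla)$-vanishing, handling the finitely many small primes separately. Your symmetry assertion via contravariant duality is fine, as is the general shape of the Hochschild--Serre reduction; but the two load-bearing estimates you need (the $G_e$-Ext bound and the atomic $\opH^i$ bound) are each at least as hard as the theorem itself, and your sketches for them do not work as stated.
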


A stronger result holds in the $m=1$ case. 
\begin{theorem}\label{extalgm=1}(\cite[Thm.~5.1]{PS11})  There exists a constant
$c(\Phi)$ with the following property. If $\lambda,\mu\in X^+$ , then
 $$\dim\Ext^1_G(L(\lambda),L(\mu))\leq c(\Phi)$$
 for any simple, simply connected  algebraic group $G$ over an algebraically closed field with root system $\Phi$. \end{theorem}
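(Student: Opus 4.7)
The plan is to reduce to the restricted case $(e=1)$ of Theorem \ref{extalg} using Steinberg's tensor product theorem, exploiting the special structure of the $m=1$ case to eliminate the dependence on $e$. Write $\lambda = \lambda_0 + p\lambda'$ and $\mu = \mu_0 + p\mu'$ with $\lambda_0, \mu_0 \in X_1$ and $\lambda', \mu' \in X^+$, so that $L(\lambda) \cong L(\lambda_0) \otimes L(\lambda')^{(1)}$ and $L(\mu) \cong L(\mu_0) \otimes L(\mu')^{(1)}$. I would apply the Lyndon-Hochschild-Serre spectral sequence for the first Frobenius kernel $G_1 \triangleleft G$ converging to $\Ext^{i+j}_G(L(\lambda), L(\mu))$. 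Since $L(\mu')^{(1)}$ is $G_1$-trivial, the $E_2$-page simplifies to
\[
E_2^{i,j} \cong \Ext^i_G\bigl(L(\lambda'),\, N_j \otimes L(\mu')\bigr),
\]
where $N_j := \Ext^j_{G_1}(L(\lambda_0), L(\mu_0))^{(-1)}$ is a finite-dimensional rational $G$-module (identified via the Frobenius isomorphism $G/G_1 \cong G^{(1)}$). Here $N_0 \cong k$ when $\lambda_0 = \mu_0$ and is $0$ otherwise, while $\dim N_1$ is bounded by a constant $B(\Phi)$ independent of $p$ and the choice of $\lambda_0, \mu_0$, by classical bounds on $G_1$-cohomology due to Andersen-Jantzen.

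The five-term exact sequence then gives
\[
\dim\Ext^1_G(L(\lambda), L(\mu)) \leq \dim E_2^{1,0} + \dim E_2^{0,1}.
\]
The recursive term $E_2^{1,0}$ equals $\Ext^1_G(L(\lambda'), L(\mu'))$ when $\lambda_0 = \mu_0$ and vanishes otherwise. This structure enables induction on the $p$-adic length of $\lambda$: either the leading digits agree and we strip them off, or they differ and the recursive contribution drops out. At the base case $\lambda' \in X_1$, Theorem \ref{extalg} with $e = 1$ supplies the bound $c(\Phi, 1, 1)$. The error term $E_2^{0,1} = \Hom_G(L(\lambda'), N_1 \otimes L(\mu'))$ is controlled by a composition-factor count: since $N_1$ has at most $B(\Phi)$ simple constituents $L(\nu)$, the bound reduces to $\sum_\nu \dim\Hom_G(L(\lambda'), L(\nu) \otimes L(\mu'))$, and each term is bounded by weight-multiplicity considerations using $\dim L(\nu) \leq B(\Phi)$.

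The main obstacle is ensuring that the error terms accumulated across all inductive layers remain uniformly bounded, independent of the $p$-adic length of $\lambda$ and $\mu$. A naive unrolling of the recursion would produce a bound growing linearly with the number of digits. The crucial point is to show that nonzero contributions from $E_2^{0,1}$ at successive layers impose strong combinatorial restrictions---essentially linkage-principle conditions on the digit pairs $(\lambda_i, \mu_i)$---so that at most a bounded number of layers (depending only on $\Phi$) can contribute nontrivially. An alternative route, perhaps cleaner, is to exploit the symmetry between $\Ext^1_G(L(\lambda), L(\mu))$ and $\Ext^1_G(L(\mu), L(\lambda))$ together with a translation or weight-swapping argument to reduce directly to the case where one of $\lambda, \mu$ lies in $X_1$, at which point Theorem \ref{extalg} immediately applies with $e = 1$, bypassing any accumulation of errors and yielding the constant $c(\Phi) = c(\Phi, 1, 1)$ (up to a multiplicative factor from $B(\Phi)$).
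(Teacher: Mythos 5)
The paper does not actually prove \cref{extalgm=1}; it is quoted directly from \cite[Thm.~5.1]{PS11}. So there is no in-text proof to compare against, and I will assess your proposal on its own terms.

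Your spectral-sequence setup is correct: for $G_1\triangleleft G$ and the Steinberg factorizations $L(\lambda)\cong L(\lambda_0)\otimes L(\lambda')^{(1)}$, $L(\mu)\cong L(\mu_0)\otimes L(\mu')^{(1)}$, the $E_2$-page does take the form $\Ext^i_G\bigl(L(\lambda'),N_j\otimes L(\mu')\bigr)$, the five-term sequence gives $\dim\Ext^1_G(L(\lambda),L(\mu))\leq\dim E_2^{1,0}+\dim E_2^{0,1}$, and $E_2^{1,0}$ vanishes unless $\lambda_0=\mu_0$, in which case it recurses to $\Ext^1_G(L(\lambda'),L(\mu'))$. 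The bound on $\dim N_1$ over restricted digits and all $p$ is genuinely available (one may deduce it, e.g., from \cref{thm:boundforFrobeniusKernel} with $e=1$, which does not use \cref{extalgm=1}). So there is no circularity in the ingredients you invoke.

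The genuine gap is the one you flag yourself: the unrolled recursion yields $\dim\Ext^1_G(L(\lambda),L(\mu))\leq\sum_{i=0}^{k}\dim E_2^{0,1,(i)}$, where $k$ is the first index at which $\lambda_i\neq\mu_i$, and each summand is controlled only by a constant $B(\Phi)$. Without a vanishing argument, the bound is $(k+1)B(\Phi)$, which still depends on the length of the common prefix of the $p$-adic expansions. Your first proposed remedy (linkage forces all but boundedly many $E_2^{0,1,(i)}$ to vanish) is the right kind of idea, but it is not carried out, and the needed vanishing is not obvious: for levels $i<k$ you have $\lambda_i=\mu_i$, so $N_1^{(i)}=\Ext^1_{G_1}(L(\lambda_i),L(\lambda_i))^{(-1)}$, and $\Hom_G\bigl(L(\lambda^{>i}),N_1^{(i)}\otimes L(\mu^{>i})\bigr)$ can a priori be nonzero at many levels since $\lambda^{>i}$ and $\mu^{>i}$ still share a long common prefix. (It would help you to observe $\Ext^1_G(L(\lambda),L(\lambda))=0$, which disposes of $\lambda=\mu$ and ensures the recursion truly terminates at level $k$, but this does not cap $k$.) Your alternative remedy, using the symmetry $\dim\Ext^1_G(L(\lambda),L(\mu))=\dim\Ext^1_G(L(\mu),L(\lambda))$ to reduce to the case where one weight is restricted, does not work: when both $\lambda$ and $\mu$ have long $p$-adic expansions, swapping them leaves both non-restricted. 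The actual argument in \cite{PS11} does not run this recursion to arbitrary depth; it isolates a bounded piece of the expansions (roughly, the digits near the first discrepancy) and shows only that piece can matter, via structural facts about $\Ext^1$ and $\Hom$ between (co)standard and irreducible modules. You would need to supply an analogous uniform truncation step to make your approach go through.
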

 
In the same paper (see \cite[Cor. 5.3]{PS11}) the aforementioned result was proved for the finite groups $G(\sigma)$. A main
 goal of the present paper amounts to extending Theorem \ref{extalg} to the finite group $G(\sigma)$ and
 irreducible modules in the defining characteristic.
 There are various reasons one wishes to obtain such analogs, along the lines of Theorem \ref{h1}.  The cases $m=2$ and $\lambda=0$ are especially important.  For example, the  second cohomology group $\opH^2(H,V)$ parametrizes non-equivalent group extensions of $V$ by $H$; it is also intimately connected to the lengths of profinite  presentations, a fact that \cite{GKKL07} presses into service. At this point, it is worth mentioning a conjecture of Holt, made a theorem by Guralnick, Kantor, Kassabov and Lubotsky:

\begin{theorem}[{\cite[Thm.~B$'$]{GKKL08}}]\label{gkkl} There is a constant $C$ so that 
\[\dim \opH^2(H,V)\leq C\dim V\]
 for any quasi-simple group $H$ and any absolutely irreducible $H$-module $V$.\footnote{It is shown in \cite[Thm.~B]{GKKL07} that one can take $C=17.5$.
}\end{theorem}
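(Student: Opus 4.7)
The plan is to invoke the Classification of Finite Simple Groups (CFSG) and dispatch the three families separately, after first reducing the quasi-simple case to the simple case. Write $Z = Z(H)$ and $S = H/Z$; since $V$ is absolutely irreducible, $Z$ acts by a scalar character $\chi$. The Lyndon--Hochschild--Serre spectral sequence
\[
E_2^{p,q} = \opH^p(S, \opH^q(Z, V)) \Rightarrow \opH^{p+q}(H, V),
\]
combined with the fact that Schur multipliers of finite simple groups have order bounded uniformly in their type, reduces the desired inequality to a bound of the same shape on $\opH^2(S, V')$ for an $S$-module $V'$ with $\dim V' \leq \dim V$. The character $\chi$ being nontrivial only introduces complications when $\ch(k) \mid |Z|$, and in that case the relevant rows of the $E_2$ page still produce only a bounded number of nonzero pieces.

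For the sporadic groups there are finitely many and their contribution is absorbed into $C$ by direct computation. For alternating groups $A_n$, one passes via the index-$2$ normal inclusion $A_n \trianglelefteq S_n$ to the analogous bound for $\opH^2(S_n, V)$, which follows from the well-developed theory of Young and Specht modules and is linear in $\dim V$. For groups of Lie type in the defining characteristic, one invokes the bound $\dim \opH^2(G(\sigma), L) \leq c(\Phi, 2, e)$ supplied by this paper (the $m=2$ version of the cohomology estimate in \cref{h1}, with $e$ controlling the length of the $p$-adic expansion of the highest weight of $L$), together with the reduction from $G$-cohomology to $G(\sigma)$-cohomology due to \cite{BNP06}; since $\dim L$ grows exponentially in $e$, the result is comfortably stronger than linear in $\dim V$.

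The main obstacle is the cross-characteristic case for groups of Lie type. The approach is to reduce the Lie rank by Harish-Chandra induction from proper Levi subgroups and to use Deligne--Lusztig theory to express irreducible characters in combinatorially tractable terms, setting up an induction on the rank. The difficulty concentrates in unipotent blocks at small defining characteristic relative to the rank, where generic cohomology is inadequate and one must exploit explicit decomposition matrix data, much as in the $\Ext^1$-analogue of Guralnick--Tiep \cite{GT11}; the Suzuki and Ree groups demand separate, Sin-type treatment \cite{Sin94}. The uniform constant $C$ is finally assembled as the maximum of the bounds obtained in each of these cases, together with the finite sporadic contributions.
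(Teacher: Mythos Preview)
The paper does not prove this statement at all: it is quoted from \cite{GKKL08} as background and motivation, with no argument given. So there is no ``paper's own proof'' to compare against. The actual proof in \cite{GKKL08} proceeds via bounded profinite presentations of finite simple groups, relating $\dim\opH^2(H,V)$ to the number of relations in a presentation, rather than by a direct CFSG case split on cohomology.

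Your sketch also has genuine gaps as a stand-alone argument. Most seriously, in the defining-characteristic case you invoke the $m=2$ bound $c(\Phi,2,e)$ from \emph{this paper} to prove a theorem the paper cites as already known; that is circular in context. More substantively, the constants $c(\Phi,2,e)$ depend on the root system $\Phi$, whereas \cref{gkkl} asserts a \emph{universal} constant $C$ valid across all quasi-simple groups; you would need to show that $c(\Phi,2,e)/\dim L$ is uniformly bounded as the rank grows, which this paper does not establish and which is not obvious. Your reduction from quasi-simple to simple is also not clean: Schur multipliers of finite simple groups are \emph{not} uniformly bounded (e.g.\ $|M(PSL_n(q))|=\gcd(n,q-1)$), so the Hochschild--Serre argument as written does not give a bounded number of nonzero terms. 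Finally, the cross-characteristic paragraph is a programme rather than a proof.
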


Suppose one knew, as in Theorem \ref{h1}, that there were a constant $c'=c'(\Phi)$ so that $\dim \opH^2(G(\sigma),L)\leq c'$. Then for a group $G(\sigma)$ of fixed Lie rank in defining characteristic\footnote{As the order of the Sylow $p$-subgroups of $G(\sigma)$ are by far the biggest when $p$ is the defining characteristic of $G(\sigma)$, one very much expects this case to give the largest dimensions of $\opH^2(G(\sigma),V)$, hence it will be the hardest to bound.}, one would have as before that this bound would be better than that proposed by Theorem \ref{gkkl} almost all the time.

One purpose of this paper is to demonstrate the existence of such a constant; moreover, we achieve an exact analog to Theorem \ref{extalg} for finite groups of Lie type. The methods are sufficiently powerful to obtain a number of other interesting results.

%%%%%%%%
%%Section 1.2
%%%%%%%%%

\subsection{Bounding ${\text{\rm \bf {Ext}}}$ for finite groups of Lie type}
Theorem \ref{thm:finitebound} below is a central result of this  paper. It gives bounds for the higher extension
groups of the  finite groups  $G(\sigma)$, where $\sigma$ is a strict endomorphism of a  simple,  simply connected algebraic group $G$ over a field of positive
characteristic, and the coefficients are irreducible modules in the defining characteristic. In
this case,  the irreducible modules $G(\sigma)$-modules are parametrized by the set $X_\sigma$ of $\sigma$-restricted dominant weights. 

 \begin{theorem}\label{thm:finitebound} Let $e,m$ be non-negative integers and let $\Phi$ be a finite  irreducible root system. Then there exists a constant $D(\Phi,m,e)$, depending only on $\Phi$, $m$ and $e$ 
(and not on any field characteristic $p$) with the following property. Given any simple, simply connected
algebraic group $G$ over an algebraically closed field $k$ of positive characteristic $p$ with root system $\Phi$, and given any strict endomorphism $\sigma$ of $G$ such that
$X_e\subseteq X_\sigma$,\footnote{The condition that $X_e\subseteq X_\sigma$ merely guarantees that $L(\lambda)$,
$\lambda\in X_e$, restricts to an irreducible $G(\sigma)$-module. In most cases, $\sigma$ is simply a Frobenius map (either standard or twisted with a graph automorphism). If $p=2$ and $G=C_2$ or $F_4$ or if $p=3$ and $G=G_2$, there are more options for $\sigma$ corresponding to Ree and Suzuki groups. See Section~\ref{fgintro} for more details.}
 then for $\lambda\in X_e,\mu\in X_\sigma$, we have
$$\dim\Ext^m_{G(\sigma)}(L(\lambda),L(\mu))\leq D(\Phi,m,e).$$ In particular, 
$$\dim\opH^m(G(\sigma),L(\lambda))\leq D(\Phi,m,0)$$ 
for all $\lambda\in X_{\sigma}$.
\end{theorem}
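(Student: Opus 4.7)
The plan is to bootstrap from the known algebraic-group bound in Theorem \ref{extalg}, extending to arbitrary $m$ the strategy that handled $m=1$ in \cite{PS11}. The essential difficulty is that the dimension of the ``finite'' $\Ext$-group must be controlled uniformly in the prime $p$, even as $\sigma$ (and hence $|G(\sigma)|$) varies.

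A natural first step is to factor the passage through the infinitesimal Frobenius kernel $G_\sigma$. Since $G_\sigma$ is an infinitesimal normal subgroup of $G$, standard machinery relates $G_\sigma$-cohomology to rational $G$-cohomology applied to suitable induced coefficient modules (for example via Kempf vanishing, or by decomposing $G_\sigma$-injectives as rational $G$-modules). The key point is that only composition factors whose $p$-adic length is bounded in terms of $m$ and $e$ actually contribute, so Theorem \ref{extalg} yields a preliminary bound $D_1(\Phi, m, e)$ on $\dim\Ext^m_{G_\sigma}(L(\lambda), L(\mu))$ for $\lambda \in X_e$ and $\mu \in X_\sigma$. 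This is worth recording on its own; in the statement of the theorem, Frobenius kernels appear alongside $G(\sigma)$, so such a bound is used both directly and as an intermediate step.

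Next, to pass from $G_\sigma$ to $G(\sigma)$, I would develop an $\Ext^m$-analog of the Bendel--Nakano--Pillen identification \cite[Thm.~5.5]{BNP06} used for $m=1$. The expected shape of such an identification is
$$\dim\Ext^m_{G(\sigma)}(L(\lambda), L(\mu)) = \dim\Ext^m_G\bigl(L(\lambda), L(\mu) \otimes N\bigr),$$
where $N$ is an induced rational $G$-module (essentially of Steinberg-like type) whose $G$-composition factors that contribute nonzero $\Ext^m$ have $p$-adic length bounded by $\Phi, m, e$ alone. Applying Theorem \ref{extalg} term by term and summing over the $p$-independent collection of contributing composition factors then yields the desired $D(\Phi, m, e)$. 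The ``In particular'' clause is just the special case $\lambda = 0 \in X_0 = \{0\} \subseteq X_e$ for $e = 0$.

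The main obstacle will be establishing this higher-$\Ext$ comparison uniformly in $p$: the $m=1$ case in \cite{BNP06} leaned on specific low-degree vanishing, while for $m \geq 2$ one must control boundary maps in the underlying spectral sequence and simultaneously bound the count of Steinberg-tensor-product constituents of $L(\mu)$ that survive independent of $\sigma$. In addition, the Ree and Suzuki families ($C_2$, $F_4$ in characteristic $2$ and $G_2$ in characteristic $3$) fall outside the standard Frobenius framework and will require case-by-case analysis in the spirit of Sin \cite{Sin94}, reducing them to the split case at the cost of handling a bounded collection of exceptional weights.
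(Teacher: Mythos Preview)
Your high-level strategy is broadly right: one does use generalized Frobenius reciprocity
\[
\Ext^m_{G(\sigma)}(L(\lambda),L(\mu))\cong\Ext^m_G\bigl(L(\lambda),L(\mu)\otimes\ind_{G(\sigma)}^Gk\bigr),
\]
then replaces the induced module by a finite truncation $\sG_{\sigma,b}$ whose sections $\nopH^0(\nu)\otimes\nopH^0(\nu^\star)^{(\sigma)}$ satisfy $\langle\nu,\alpha_0^\vee\rangle\leq b$ with $b$ depending only on $\Phi,m$ (this is Theorem~\ref{existsAnF}, proved uniformly for all $m$ via the weight bound Theorem~\ref{thm:Gr-weightbound}---no low-degree vanishing miracles are needed, contrary to your worry). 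Two points, however, separate your sketch from a proof.

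First, the detour through a \emph{dimension} bound for $\Ext^m_{G_\sigma}$ is not how the argument runs, and there is no direct passage from $G_\sigma$-dimension bounds to $G(\sigma)$-dimension bounds. What the proof actually uses from the infinitesimal side is only the \emph{weight} bound on the $G$-module $\Ext^j_{G_\sigma}(L(\lambda),L(\mu))^{(-\sigma)}$ (Theorem~\ref{thm:Gr-weightbound}); this is what allows the truncation of the induced module. The $G_\sigma$-dimension bound (Theorem~\ref{thm:boundforFrobeniusKernel}) is proved separately and is not an input here.

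Second---and this is the genuine gap---your sentence ``summing over the $p$-independent collection of contributing composition factors'' hides the hard step. After truncation, the composition factors of $\sG_{\sigma,b}$ have the form $L(\zeta)\otimes L(\nu^\star)^{(\sigma)}$ with $\zeta,\nu$ in a fixed finite set, so one is reduced to bounding
\[
\dim\Ext^m_G\bigl(L(\lambda)\otimes L(\nu)^{(\sigma)},\,L(\mu)\otimes L(\zeta)\bigr).
\]
To apply Theorem~\ref{extalg} one must decompose $L(\mu)\otimes L(\zeta)$ into irreducibles $L(\xi)$ with $\xi\in X_{e'}$ for a uniform $e'$, \emph{and} bound the number of such $\xi$ independently of $p$. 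Since $\mu$ ranges over $X_\sigma$ (so $\dim L(\mu)$ is unbounded in $p$), this length bound is not obvious: it is Lemma~\ref{compFacts}, whose proof requires the validity of the Lusztig character formula for large $p$ together with the PIM length bounds of \cite[Lem.~7.2]{PS11}. Without this lemma the ``term by term'' application of Theorem~\ref{extalg} does not terminate in a $p$-independent bound. The Ree and Suzuki cases, incidentally, are absorbed into the general framework (the relevant spectral sequences and weight bounds extend to $G_{r/2}$) rather than requiring separate Sin-style computations at this stage.
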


 Let us outline the proof. Following the ideas first introduced by Bendel, Nakano, and Pillen,  $\Ext$-groups for the finite groups of Lie type $G(\sigma)$ to $\Ext$-groups for an ambient algebraic group $G$.  One has, by generalized Frobenius reciprocity, that 
 \begin{equation}\label{gfr}\Ext^m_{G(\sigma)}(L(\lambda),L(\mu))\cong \Ext^m_{G}(L(\lambda),L(\mu)\otimes \ind_{G(\sigma)}^Gk).\end{equation}
  One key step in the proof is to show (in \S\ref{filtsec}) that the induced $G$-module $\ind_{G(\sigma)}^Gk$ has a filtration with sections of the form $\nopH^{0}(\lambda)\otimes\nopH^{0}(\lambda^\star)^{(\sigma)}$, with each $\lambda\in X^+$ appearing exactly once. If $G_\sigma$ denotes the scheme-theoretic kernel of the map $\sigma:G\to G$, we investigate the right hand side of (\ref{gfr}) using the Hochschild--Serre spectral sequence corresponding to $G_\sigma\triangleleft G$. Bounds on the possible weights occurring in $\Ext^m_{G_\sigma}(L(\lambda),L(\mu))$ (Theorem \ref{thm:Gr-weightbound}) allow us to see (in Theorem \ref{existsAnF}) that cofinitely many of the sections occurring in $\ind_{G(\sigma)}^Gk$ contribute nothing to the right hand side of (\ref{gfr}), so $\ind_{G(\sigma)}^Gk$ can be replaced in that expression with a certain finite dimensional rational $G$-module. This, together with a result bounding the composition factor length of tensor products (Lemma \ref{compFacts}), provides the ingredients to prove Theorem \ref{thm:finitebound}. We show, in fact, that the maximum of $\dim\Ext_{G_\sigma}^m(L(\lambda),L(\mu))$ is bounded above by a certain multiple of $c(\Phi,m, e')$, where $c(\Phi,m, e')$ is the integer coming from Theorem \ref{extalg}.

%%%%%%%%%%
%%Section 1.3
%%%%%%%%%%

\subsection{Bounding ${\text{\rm \bf {Ext}}}$ for Frobenius Kernels}\label{SS:boundforFrobenius}

Let $G$ be a simple, simply connected group over an algebraically closed field of positive
characteristic. For a strict endomorphism $\sigma:G\to G$, let $G_\sigma$ denote (as before) its scheme-theoretic kernel.  
The main result in \S\ref{S:FrobeniusKernels} is the proof of the following.

\begin{theorem}\label{thm:boundforFrobeniusKernel}  Let $e,m$ be non-negative integers and let $\Phi$ be a finite  irreducible root system. Then there exists a constant $E(\Phi,m,e)$ (resp., $E(\Phi)$), depending only on $\Phi$, $m$ and $e$ (resp.,  $\Phi$) 
(and not on any field characteristic $p$) with the following property. Given any simple, simply connected
algebraic group over an algebraically closed field $k$ of positive characteristic $p$ with root system $\Phi$, and given any strict endomorphism $\sigma$ of $G$ such that
$X_e\subseteq X_\sigma$, then for $\lambda\in X_e,\mu\in X_\sigma$, 
 $$
\dim \operatorname{Ext}^{m}_{G_\sigma}(L(\lambda),L(\mu))\leq E(\Phi,m,e).
$$ 
In particular, 
$$\dim \operatorname{H}^{m}(G_\sigma,L(\la))\leq E(\Phi,m,0)$$
 for all
 $\la \in X_\sigma$. Furthermore, 
$$\dim\Ext^1_{G_\sigma}(L(\lambda),L(\mu)) \leq E(\Phi)$$
for all $\la, \mu\in X_\sigma$.
\end{theorem}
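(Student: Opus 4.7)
The plan is to view $M:=\Ext^m_{G_\sigma}(L(\lambda),L(\mu))$ as a finite-dimensional rational $G$-module through the identification $G/G_\sigma\cong G$ induced by $\sigma$, and to bound $\dim M$ by separately controlling (a) which composition factors can appear and their individual dimensions, and (b) their multiplicities. For (a), I would invoke Theorem \ref{thm:Gr-weightbound}, already established in the paper, which asserts that every composition factor of the untwisted module $M^{[-\sigma]}$ is of the form $L(\nu)$ with $\nu$ lying in a finite set $\Lambda=\Lambda(\Phi,m,e)$ whose cardinality and maximum depend only on $\Phi,m,e$ and not on $p$. Since the weights $\nu\in\Lambda$ are expressible as $\sum a_i\omega_i$ with $|a_i|$ independent of the characteristic, Weyl's dimension formula yields $\dim L(\nu)\leq\dim V(\nu)\leq D_1(\Phi,m,e)$, also independent of $p$.

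To bound the multiplicities $[M:L(\nu)]$, I would apply the Lyndon--Hochschild--Serre spectral sequence for the normal infinitesimal subgroup scheme $G_\sigma\triangleleft G$ with coefficients in $L(\mu)\otimes V^{(\sigma)}$ for an auxiliary rational $G$-module $V$. Since $G_\sigma$ acts trivially on the $\sigma$-twist $V^{(\sigma)}$, one computes $\Ext^j_{G_\sigma}(L(\lambda),L(\mu)\otimes V^{(\sigma)})\cong\Ext^j_{G_\sigma}(L(\lambda),L(\mu))\otimes V^{(\sigma)}$, and the spectral sequence takes the form
\[
E_2^{i,j}=\Ext^i_G(V^*,M_j)\Longrightarrow\Ext^{i+j}_G\bigl(L(\lambda),L(\mu)\otimes V^{(\sigma)}\bigr),
\]
where $M_j:=\Ext^j_{G_\sigma}(L(\lambda),L(\mu))^{[-\sigma]}$, so $M=M_m$. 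Choosing $V^*=L(\nu)$ for $\nu\in\Lambda$, the row $i=0$ recovers the socle multiplicity $\dim\Hom_G(L(\nu),M_j)$. The abutment is controlled by combining Lemma \ref{compFacts} (to decompose $L(\mu)\otimes L(\nu)^{(\sigma)}$ into a bounded number of composition factors $L(\xi)$) with Theorem \ref{extalg}, which gives $\dim\Ext^r_G(L(\lambda),L(\xi))\leq c(\Phi,r,e)$ for each $\xi$ since $\lambda\in X_e$. An induction on $m$, with trivial base case (as $\Hom_{G_\sigma}(L(\lambda),L(\mu))$ is either $k$ or $0$ for $\lambda,\mu\in X_\sigma$), then yields the required bound $E(\Phi,m,e)$.

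The principal obstacle is the familiar one: the spectral sequence directly detects socle information, not composition factor multiplicities, and its abutment entangles contributions from multiple $E_2^{i,j}$. Converting socle data into full composition factor multiplicities requires iterating on the socle series of $M_j$ and bounding each successive layer via $\Ext^1_G$-terms against previously bounded subquotients; simultaneously, the differentials $d_r\colon E_r^{0,j}\to E_r^{r,j-r+1}$ must be controlled by bounds on $\dim M_{j'}$ and $\dim\Ext^*_G(L(\nu),M_{j'})$ for $j'<j$. The induction on $m$ is arranged precisely to provide this bookkeeping.

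The final assertion follows the same template but is sharpened by using Theorem \ref{extalgm=1} (which bounds $\dim\Ext^1_G$ independently of $e$) in place of Theorem \ref{extalg}, together with the $m=1$ specialization of Theorem \ref{thm:Gr-weightbound} which provides a weight set $\Lambda(\Phi)$ depending only on the root system. This yields the uniform bound $E(\Phi)$ valid for all $\lambda,\mu\in X_\sigma$, with no restriction of $\lambda$ to $X_e$.
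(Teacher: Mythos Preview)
Your approach contains a genuine gap at exactly the point you flag as the principal obstacle. The spectral sequence with $V^*=L(\nu)$ yields only the socle multiplicity $\dim\Hom_G(L(\nu),M_m)$, not the composition multiplicity $[M_m:L(\nu)]$. You propose to recover the latter by iterating down the socle filtration of $M_m$, bounding each successive layer via $\Ext^1_G$-terms; but the number of iterations is the Loewy length of $M_m$, and you give no argument that this length is bounded independently of $p$. Without such a bound the constants accumulate without control and the argument does not close. (One could attempt to bound Loewy lengths in $\mathcal{C}_{s(m)}$ via its quasi-hereditary structure together with \cite{AJS94} for large $p$, but this is a substantial extra ingredient you do not invoke, and it makes the route far more laborious than necessary.)

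The paper avoids this difficulty entirely by a more direct method. Generalized Frobenius reciprocity gives
\[
\dim\Ext^m_{G_\sigma}(L(\lambda),L(\mu))=\dim\Ext^m_G\bigl(L(\lambda),\,L(\mu)\otimes\ind_{G_\sigma}^G k\bigr),
\]
and $\ind_{G_\sigma}^G k\cong k[G]^{(\sigma)}$ carries a filtration with sections $(\nopH^0(\nu)^{(\sigma)})^{\oplus\dim \nopH^0(\nu)}$. A single Hochschild--Serre argument, combined with Theorem~\ref{thm:Gr-weightbound}, shows that only $\nu$ lying in a finite set $X(\Phi,m)$ can contribute. Each surviving term $\Ext^m_G(L(\lambda),L(\mu)\otimes L(\tau)^{(\sigma)})$ is then bounded \emph{directly} by Theorem~\ref{extalg}: since $\mu\in X_\sigma$, Steinberg's tensor product theorem makes $L(\mu)\otimes L(\tau)^{(\sigma)}$ irreducible, so the bound $c(\Phi,m,e)$ applies with no further decomposition. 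There is no induction on $m$, no socle iteration, and no appeal to Lemma~\ref{compFacts}; your invocation of that lemma to decompose $L(\mu)\otimes L(\nu)^{(\sigma)}$ is in fact superfluous, as this module is already simple.
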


The proof proceeds by investigating the induced module $\ind_{G_\sigma}^Gk$. This time there is a filtration of $\ind_{G_\sigma}^Gk$ by sections of the form $(\nopH^0(\nu)^{(\sigma)})^{\oplus \dim\nopH^0(\nu)}$. Again, only cofinitely many of these sections contribute to 
\[\Ext^i_{G_\sigma}(L(\lambda),L(\mu))\cong \Ext^i_G(L(\lambda),L(\mu)\otimes \ind_{G_\sigma}^G(k)),\] 
so $\ind_{G_\sigma}^Gk$ can be replaced on the right hand side by a finite dimensional rational $G$-module.

Sections \S\ref{examples:boundforFrobenius} and \S\ref{lowerBound} provide various examples to show that Theorem \ref{thm:boundforFrobeniusKernel} cannot be improved upon. In particular, Theorem \ref{thm:lowerboundforH1} shows that the inequality 
$$\operatorname{max}\{\dim \operatorname{H}^{1}(G_{r},L(\lambda)):\ \lambda\in X_r\}\geq \dim V$$ 
holds,
where $V$ is an irreducible non-trivial finite dimensional rational $G$-module of smallest dimension. Thus, the generalization of the Guralnick conjecture in \cite{Gur86} to general finite group schemes cannot hold.

%%%%%%%%%%%
%%Section 1.4
%%%%%%%%%%%%%

\subsection{Cartan Invariants}
Let again $\sigma$ denote a strict endomorphism of a simple, simply connected algebraic group $G$, so that $G(\sigma)$ is a finite group of Lie type. In addition to  cohomology, we tackle the related question of bounding the Cartan invariants $[U_\sigma(\lambda):L(\mu)]$, where $U_\sigma(\lambda)$ denotes the projective cover of a irreducible module $L(\lambda)$ for a finite group of Lie type $G(\sigma)$.

If $H$ is a finite group and $k$ is an algebraically closed field, let $c(kH)=\max\{\dim \Hom_{kH}(P,Q)\}$ where the maximum is over all $P$ and $Q$ which are principal indecomposable modules for the group algebra $kH$. Then $c(kH)$ is the maximum Cartan invariant for $kH$. The following question has been raised by Hiss \cite[Question 1.2]{His00}.

 \begin{question}\label{hissq} Is there a function $f_p:\bN\to\bN$ such that $c(kH)\leq f_p(\log_p|H|_p)$
 for all finite groups $H$ and all algebraically closed fields $k$ of characteristic $p$?\end{question}

We provide in our context of finite groups of Lie type in their defining characteristic representations an
answer to the above question with $f_p=f$, independent of $p$.\footnote{In the language of block theory
for finite groups, we have bounded these Cartan invariants not only by a function of the defect group, but by
a function of the defect itself.}   This may be viewed as complementary
to many of the discussions in \cite{His00} for non-defining characteristic. As pointed out by Hiss, the issue of bounding Cartan invariants is related to many other questions in modular representation theory of finite groups, such as the Donovan conjecture.\footnote{This states that, given a finite $p$-group $P$, there are, up to Morita equivalence, only finitely many blocks of group algebras in characteristic $p$  having defect group $P$. As Hiss points out, the Morita equivalence class is determined by 
a basic algebra, and there are only finitely many possibilities for the latter when its (finite) field of definition is known, and
the Cartan matrix entries are bounded. The size of the Cartan matrix is bounded in terms of the defect group order, for any block of a finite group algebra, by a theorem of Brauer and Feit. 
In more direct applications, bounds on Cartan invariants for finite group algebra blocks give bounds on decomposition numbers, through the equation $C=D^t\cdot D$, relating the decomposition matrix $D$ to the Cartan matrix $C$. The equation also shows that the number of ordinary characters can be bounded, using a bound for the size of Cartan matrix
entries, since a bound on the number of Brauer characters (size of the Cartan matrix) is available. It is a still open conjecture of Brauer that the number of ordinary characters is also bounded by (the order of) the
defect group.} For later results in the area of representations in non-defining characteristic, see Bonaf\'e--Rouqui\"er \cite{BR03}.
 
As described more completely in Section 2.3 below, any strict endomorphism $\sigma:G\to G$ involves a  ``power" $F^s$ of
the Frobenius morphism on $G$, where $s$ is a positive integer, except, in the cases of the Ree and Suzuki groups, it is allowed to be half an odd integer. We call $s$ the {\it height} of $\sigma$. 

\begin{theorem}\label{thm:Cartan-finite} Let $s$ be non-negative integer or half an odd positive integer,   and let $\Phi$ be an irreducible root system. Then there exists a constant $N(\Phi,s)$ such that for any simple, simply connected algebraic group $G$ with root system $\Phi$ 
\[
[U_\sigma(\la):L(\mu)|_{G(\sigma)}] \leq N(\Phi,s)
\]
for all $\la, \mu \in X_\sigma$ and any strict endomorphism $\sigma$ of height $s$.
\end{theorem}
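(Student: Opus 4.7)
The plan is to interpret the Cartan invariant $[U_\sigma(\la):L(\mu)|_{G(\sigma)}]$ as a composition factor multiplicity in a $G(\sigma)$-module that arises from a rational $G$-module with controlled structure, and then deploy the bounds established earlier in the paper. The entry point is the Steinberg module $\mathrm{St}_\sigma$ for $G(\sigma)$, which is simultaneously projective and injective as a $kG(\sigma)$-module. In particular, $\mathrm{St}_\sigma\otimes L(\zeta)$ is projective as a $G(\sigma)$-module for any $\zeta\in X_\sigma$. A classical argument due to Humphreys and Ballard produces a weight $\zeta=\zeta_\la$ (roughly $(q-1)\rho-w_0\la$, where $q$ is the $\sigma$-parameter) for which $U_\sigma(\la)$ occurs as a direct summand of $\mathrm{St}_\sigma\otimes L(\zeta_\la)$. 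Both tensor factors are restrictions of rational $G$-modules, so
\[ [U_\sigma(\la):L(\mu)|_{G(\sigma)}] \;\leq\; [\mathrm{St}_\sigma\otimes L(\zeta_\la):L(\mu)]_{G(\sigma)}. \]

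Next, I would analyze the right-hand side through the $G$-module structure. As a rational $G$-module, $\mathrm{St}_\sigma\otimes L(\zeta_\la)$ is a tensor product of two irreducibles; Lemma~\ref{compFacts} bounds the number of its $G$-composition factors by a constant depending only on $\Phi$ and $s$, independent of $p$. For each such $G$-composition factor $L(\nu)$, the weight $\nu$ is bounded by $2(q-1)\rho$, so its $p^r$-adic expansion (with $r$ the integer part of $2s$) has a bounded number of digits. By the Steinberg tensor product theorem and the fact that the Frobenius twist $F^r$ acts trivially on $G(\sigma)$-module structure, $L(\nu)|_{G(\sigma)}$ decomposes as a tensor product of boundedly many simple $G(\sigma)$-modules with $p^r$-restricted highest weights. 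The multiplicity of $L(\mu)$ in such an iterated tensor product is then controlled by iterating the same analysis on the resulting tensor products, using Lemma~\ref{compFacts} together with Steinberg's theorem at each stage. Since the relevant weights remain confined to a bounded dilation of the fundamental region, the recursion terminates, producing the desired constant $N(\Phi,s)$.

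The main obstacle will be ensuring that the iterated application of Lemma~\ref{compFacts} yields a $p$-independent bound, since each intermediate tensor product can involve simples whose highest weights grow with $p$. The key point is that Lemma~\ref{compFacts} must be deployed in a form that bounds composition factor length in terms of the shape of the weights (their dominance relations) rather than their absolute size, so that once the weights remain in a bounded dilation of the $p^r$-alcove the bounds stay uniform. The Ree and Suzuki cases with half-odd integer height will require additional attention, most likely following arguments analogous to those of Sin invoked earlier in the $\Ext^1$ analysis. One fallback, should the iteration prove too delicate, would be to first prove the corresponding Cartan bound for the Frobenius kernel $G_\sigma$ using Theorem~\ref{thm:boundforFrobeniusKernel}, and then bootstrap to $G(\sigma)$ via the standard relationship between PIMs for $G_\sigma$ and for $G(\sigma)$.
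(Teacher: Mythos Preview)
Your main approach has a genuine gap at the very first invocation of Lemma~\ref{compFacts}. That lemma bounds the composition length of $L(\mu)\otimes L(\xi)$ only when one factor lies in $X_e$ for a fixed $e$ \emph{and} the other satisfies $\langle\xi,\alpha_0^\vee\rangle<b$ for a bound $b$ independent of $p$. In $\mathrm{St}_\sigma\otimes L(\zeta_\la)$ both highest weights are of size roughly $(p^r-1)\rho$, so neither factor meets the second hypothesis, and the lemma does not apply. In fact the $G$-composition length of $\mathrm{St}_\sigma\otimes L(\zeta_\la)$ genuinely grows with $p$ (its $G_r$-decomposition into PIMs already has up to $p^{r|\Phi^+|}$ summands), so no repair of this step is possible.

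The paper's argument is close to your fallback. For $p\geq 2(h-1)$ one passes to the $G_r$-PIM $Q_r(\la)$, which has a $G$-structure and is still projective on restriction to $G(\sigma)$, so $U_\sigma(\la)$ is a summand of $Q_r(\la)|_{G(\sigma)}$. The crucial input you are missing is \cite[Lem.~7.2]{PS11}, which bounds the $G$-composition length of $Q_r(\la)$ by a constant $k(\Phi,r)$ independent of $p$; this replaces your failed application of Lemma~\ref{compFacts}. Each $G$-factor of $Q_r(\la)$ has the form $L(\mu_0)\otimes L(\mu_1)^{(\sigma)}$ with $\mu_0\in X_r$ and $\mu_1<_Q 2\rho$. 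On restriction to $G(\sigma)$ this becomes $L(\mu_0)\otimes L(\mu_1)$, and \emph{now} Lemma~\ref{compFacts} legitimately applies, since $\mu_1$ is bounded independently of $p$. The iteration you describe is then carried out exactly as you suggest (the paper checks it terminates after at most $2(h-1)$ steps). Small primes, including the Ree and Suzuki cases (which only involve $p=2,3$), are handled by the crude bound $\dim Q_r(\la)$ over finitely many exceptional cases, so no Sin-type analysis is needed. Note also that Theorem~\ref{thm:boundforFrobeniusKernel} does not directly yield Cartan invariants; the Frobenius-kernel Cartan bound (Theorem~\ref{thm:Cartan-Frobenius}) likewise rests on \cite[Lem.~7.2]{PS11}.
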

In the case that $H=G(\sigma)$, we have that $\log_p(|H|_p)$ is independent of $p$. In fact, it is bounded above by $\log_p((p^s)^{|\Phi|^+})=s|\Phi^+|$ where $s$ is the height of $\sigma$. Now, our theorem provides a function answering Hiss's question: Take $f_p(m)=\max_{r|\Phi^+|\leq m} N(\Phi,s)$ as in Theorem \ref{thm:Cartan-finite}, where again $s$ is the height of $\sigma$. Then we have $c(kG)\leq f_p(m)$ as required. In fact, we get an especially strong answer to Question \ref{hissq} since our function $f_p$ is actually independent of $p$, so can be replaced by a universal function $f$.

The process of proving Theorem \ref{thm:Cartan-finite} leads to an even stronger result, bounding
the composition factor length of the PIMs for $G(\sigma)$ and for $G_\sigma$. This result is stated formally in 
Corollary \ref{lastcor}. The analog of Theorem \ref{thm:Cartan-finite} for $G_\sigma$ 
is proved in the same section,
as  is Theorem \ref{thm:Cartan-Frobenius}.

%%%%%%%%%%%%%
%%Section 2
%%%%%%%%%%%%%%

\section{Preliminaries}
\subsection{Notation}  Throughout this paper, the following basic notation will be used. In many cases,
the decoration ``$r$" (or ``$q$") used for split Chevalley groups has an analog ``$\sigma$" for twisted Chevalley
groups, as indicated.

\begin{enumerate}

\item[(1)] $k$: an algebraically closed field of characteristic $p>0$.

\item[(2)] $G$: a simple, simply connected algebraic group which is defined 
and split over the finite prime field ${\mathbb F}_p$ of characteristic $p$. The assumption
that $G$ is simple (equivalently, its root system $\Phi$ is irreducible) is largely one of convenience.
All the results of this paper extend easily to the semisimple, simply connected case.

\item[(3)] $F:G\rightarrow G$: the Frobenius morphism. 

\item[(4)] $G_r=\text{ker }F^{r}$: the $r$th Frobenius kernel of $G$. More generally, if $\sigma:G\to G$
is a surjective endomorphism, then $G_\sigma$ denotes the scheme-theoretic kernel of $\sigma$.

\item[(5)] $G({\mathbb F}_{q})$: the associated finite Chevalley group. More generally, if $\sigma:G\to G$
is a surjective endomorphism, $G(\sigma)$ denotes the subgroup of $\sigma$-fixed points.

\item[(6)] $T$: a maximal split torus in $G$. 

\item[(7)] $\Phi$: the corresponding (irreducible) root system associated to $(G,T)$.

\item[(8)] $\Pi=\{\alpha_1,\cdots,\alpha_n\}$: the set of simple roots (Bourbaki ordering).  

\item[(9)] $\Phi^{\pm}$: the positive (respectively, negative) roots.  

\item[(10)] $\alpha_0$: the maximal short root.

\item[(11)]  $B$: a Borel subgroup containing $T$ corresponding to the negative roots. 

\item[(12)] $U$: the unipotent radical of $B$.

\item[(13)] $\mathbb E$: the Euclidean space spanned by $\Phi$ with inner product $\langle\,,\,\rangle$ normalized so that $\langle\alpha,\alpha\rangle=2$ for $\alpha \in \Phi$ any short root.

\item[(14)] $\alpha^\vee=2\alpha/\langle\alpha,\alpha\rangle$: the coroot of $\alpha\in \Phi$.

\item[(15)] $\rho$: the Weyl weight defined by $\rho=\frac{1}{2}\sum_{\alpha\in\Phi^+}\alpha$.

\item[(16)] $h$: the Coxeter number of $\Phi$, given by $h=\langle\rho,\alpha_0^{\vee} \rangle+1$.

\item[(17)] $W=\langle s_{\alpha_1},\cdots,s_{\alpha_n}\rangle\subset{\mathbb O}({\mathbb E})$: the Weyl group of $\Phi$, generated by the orthogonal reflections $s_{\alpha_i}$, $1\leq i\leq n$. For $\alpha\in \Phi$, $s_\alpha:{\mathbb E} \to{\mathbb E}$ is the orthogonal reflection in the hyperplane $H_\alpha\subset \mathbb E$ of vectors orthogonal to $\alpha$.

\item[(18)] $W_p=p Q \rtimes W$: the affine Weyl group, where $Q={\mathbb Z}\Phi$ is the root lattice, is generated by the affine reflections $s_{\alpha,pr}:{\mathbb E}\to{\mathbb E}$ defined by $s_{\alpha,rp}(x)=x-[\langle x,\alpha^\vee\rangle-rp]\alpha$, $\alpha\in\Phi$, $r\in\mathbb Z$. Here $p$ can be a positive integer.
 $W_p$ is a Coxeter group with fundamental system $S_p=\{s_{\alpha_1},\cdots, s_{\alpha_n}\}\cup\{s_{\alpha_0,-p}\}$.
 
\item[(19)] $l:W_p\to\mathbb N$: the usual length function on $W_p$.

\item[(20)] $X=\mathbb Z \varpi_1\oplus\cdots\oplus{\mathbb Z}\varpi_n$: the weight lattice, where the fundamental dominant weights $\varpi_i\in{\mathbb E}$ are defined by $\langle\varpi_i,\alpha_j^\vee\rangle=\delta_{ij}$, $1\leq i,j\leq n$.

\item[(21)] $X^+={\mathbb N}\varpi_1+\cdots+{\mathbb N}\varpi_n$: the cone of dominant weights.

\item[(22)] $X_{r}=\{\lambda\in X^+: 0\leq \langle\lambda,\alpha^\vee\rangle<p^{r},\,\,\forall \alpha\in\Pi\}$: the set of $p^{r}$-restricted dominant weights. As discussed later, if $\sigma:G\to G$ is a surjective endomorphism,
$X_\sigma$ denotes the set of $\sigma$-restricted dominant weights.

\item[(23)] $\leq$, $\leq_{Q}$ on $X$: a partial ordering of weights, for $\lambda, \mu \in X$, $\mu\leq \lambda$ (respectively $\mu\leq_{Q} \lambda$) if and only if $\lambda-\mu$ is a linear combination 
of simple roots with non-negative integral (respectively, rational) coefficients. 

%\item[(24)] $C^-= C^-_p=\{\lambda\in {\mathbb E} : -p< \langle\lambda+\rho,\alpha^\vee\rangle< 0,\,\forall \alpha\in\Phi^+\}$ 

\item[(24)] $\lambda^\star : = -w_0\lambda$: where $w_0$ is the longest word in the Weyl group $W$ and $\lambda\in X$. 

\item[(25)] $M^{(r)}$:  the module obtained by composing the underlying representation for 
a rational $G$-module $M$ with $F^{r}$.
More generally, if $\sigma:G\to G$ is a surjective endomorphism $M^{(\sigma)}$ denotes the module
obtained by composing the underlying representation for $M$ with $\sigma$.

\item[(26)] $\nopH^0(\lambda) := \operatorname{ind}_B^G\lambda$, $\lambda\in X^+$: the induced module whose character is provided by Weyl's character formula.  

\item[(27)] $V(\lambda)$, $\lambda\in X^+$: the Weyl module of highest weight $\lambda$. Thus, $V(\lambda)\cong \nopH^0(\lambda^\star)^*$.

\item[(28)] $L(\lambda)$: the irreducible finite dimensional $G$-module with highest weight $\lambda$. 

\end{enumerate}

%%%%%%%%%%%
%%Section 2.2 - Lie type
%%%%%%%%%%%%

\subsection{Finite groups of Lie type}\label{fgintro} This subsection sets the stage for studying the cohomology
of the finite groups of Lie type. That is,   we consider the groups $G(\sigma)$ of 
 $\sigma$-fixed points for a strict endomorphism $\sigma:G\to G$. By definition, $G(\sigma)$ is
 a finite group.   There are three cases to consider.
 
 \begin{itemize}
 \item[(I)] The finite Chevalley groups $G({\mathbb F}_q)$. For a positive integer $r$, let $q=p^r$ and set
  $G(F^r)=G({\mathbb F}_q)$, the group of $F^r$-fixed points. Thus, $\sigma=F^r$ in this case.
  
  \item[(II)] The twisted Steinberg groups. Let $\theta$ be a non-trivial graph automorphism of $G$ stabilizing $B$ and $T$. 
 For a positive integer $r$, set $\sigma=F^r\circ \theta=\theta\circ F^r: G\to G$. Then let $G(\sigma)$ be the
  finite group of $\sigma$-fixed points.  Thus, $G(\sigma)={^2A}_n(q^2)$, $^2D_n(q^2)$, $^3D_4(q^3)$, or 
  $^2E_6(q^2)$.

  \item[(III)] The Suzuki groups and Ree groups.  Assume that $G$ has type $C_2$ or $F_4$ and $p=2$
  or that $G$ has type $G_2$ and $p=3$. Let  $F^{1/2}:G\to G$ be a fixed purely inseparable isogeny 
  satisfying $(F^{1/2})^2=F$; we do not repeat the explicit description of $F^{1/2}$, but instead refer to
  the lucid discussion given in \cite[I, 2.1]{SpSt70}.
  For an odd positive integer $r$, set $\sigma=F^{r/2}=(F^{1/2})^r$. 
  Thus, $G(\sigma) ={^2C}_2(2^{\frac{2m+1}{2}})$,
  ${^2F}_4(2^{\frac{2m+1}{2}})$, or, $^2G_2(3^{\frac{2m+1}{2}})$. Both here and in (II), we follow the notation suggested in 
  \cite{GLS98}.\footnote{For a discussion of the differences between
  the simply connected and adjoint cases in case (III), see \cite[p. 1012]{Sin94}.}
   
   \end{itemize}
   
 In all the above cases, the group scheme-theoretic kernel $G_\sigma$ of $\sigma$ plays an important role. In
 case (I), where $\sigma=F^r$, this kernel is commonly denoted $G_r$, and it is called the $r$th Frobenius kernel. 
 In case (II),  with $\sigma=F^r\circ\theta$, $\theta$ is an automorphism
so that $G_\sigma=G_r$. In case (III), with $\sigma=F^{r/2}=
 (F^{1/2})^r$, with $r$ an odd positive integer, we often denote
 $G_\sigma$ by $G_{r/2}$ . For example, $G_{1/2}$ has coordinate algebra $k[G_{1/2}]$,  
 the dual of the restricted enveloping algebra of the subalgebra (generated by the short root spaces) of the Lie algebra of $G$ generated
 by the short simple roots. 
 
\begin{remark}\label{generalFrobenius} (a) The Frobenius kernels $G_r$ play a central role in
 the representation theory of $G$; see, for example, Jantzen \cite{Jantzen} for an exhaustive
 treatment. These results are all available in cases (I) or (II). But many standard results using $G_r$ hold equally well for the more exotic
 infinitesimal subgroups $G_{r/2}$ in case (III), which we now discuss. Suppose $r=2m+1$ is an odd positive 
 integer and $\sigma=F^{r/2}$. For a rational $G$-module $M$, let $M^{(r/2)}=M^{(\sigma)}$ be the
 rational module obtained by making $G$ act on $M$ through $\sigma$. Additionally, if $M$ has the form
 $N^{(r/2)}$ for some rational $G$-module $N$, put $M^{(-r/2)}=N$. The subgroup $G_{r/2}$ is a normal
 subgroup scheme of $G$, and, given a rational $G$-module $M$, there is a (first quadrant) Hochschild-Serre spectral sequence
 \begin{equation}\label{HS} E^{i,j}_2=\opH^i(G/G_{r/2},\opH^j(G_{r/2},M))
 \cong \opH^i(G,\opH^j(G_{r/2},M)^{(-r/2)})\Rightarrow \opH^{i+j=n}(G,M)\end{equation}
 computing the rational $G$-cohomology of $M$ in degree $n$ in terms of rational cohomology
 of $G$ and $G_{r/2}$.\footnote{We use here that $G/G_{r/2}\cong G^{(r/2)}$,  where $G^{(r/2)}$ has coordinate
 algebra $k[G]^{(r/2)}$. When $G^{(r/2)}$ is identified with $G$, the rational $G/G_{r/2}$-module $\opH^j(G_{r/2},M)$
 identifies with $\opH^j(G_{r/2},M)^{(-r/2)}$.} Continuing with case (III), given a rational $G_{r/2}$-module $M$, there is a Hochschild-Serre
 spectral sequence (of rational $T$-modules)
 \begin{equation}\label{HS2}
 \begin{aligned} E_2^{i,j}=\opH^i(G_{r/2}/G_{1/2}, \opH^j(G_{1/2},M))&\cong& \opH^i(G_{(r-1)/2},\opH^j(G_{1/2},M)^{(-1/2)})^{(1/2)}\\ 
 &\ & \Rightarrow 
 \opH^{i+j=n}(G_{r/2},M).\end{aligned}\end{equation}
 Since $r$ is odd, $G_{(r-1)/2}$ is a classical Frobenius kernel. One could also replace $G_{1/2}$ above by
 $G_1$, say, in type (III), but usually $G_{1/2}$ is more useful.
 
 In addition, still in type (III), given an odd positive integer  $r$ and $\lambda\in X^+$ (viewed as a one-dimensional rational $B$-module), there is a spectral sequence
 \begin{equation}\label{BorelHS}
 E_2^{i,j}=R^i\ind_{B}^G\opH^j(B_{r/2},\lambda)^{(-r/2)}\Rightarrow \opH^{i+j=n}(G_{r/2}, H^0(\lambda))^{(-r/2)}.
 \end{equation}
 This is written down in the classical $r\in\mathbb N$ case in \cite[II.12.1]{Jantzen}, but the proof
 is a special case of \cite[I.6.12]{Jantzen} which applies in all our cases.  
%Similar remarks apply to the case (II) situation.
  
 (b) The irreducible $G(\sigma)$-modules (in the defining characteristic) are the restrictions to $G(\sigma)$
 of the irreducible $G$-modules $L(\lambda)$, where $\lambda$ is a $\sigma$-restricted dominant
 weight.  In cases (I) and (II), these $\sigma$-restricted weights
 are just the $\lambda\in X^+$ such that $\langle\lambda,\alpha^\vee\rangle<p^r$, for all $\alpha\in\Pi$. 
  In addition, any $\lambda\in X^+$ can be uniquely written as $\lambda=\lambda_0+p^r\lambda_1$, where
 $\lambda_0\in X_r$ and $\lambda_1\in X^+$. In case (I), the Steinberg tensor product theorem
 states that $L(\lambda)\cong L(\lambda_0)\otimes L(\lambda_1)^{(r)}$. 
 
 In case (II), let $\sigma^*:X\to X$ be the restriction of the comorphism of $\sigma$ to $X$. Then
 write $\lambda=\lambda_0+\sigma^* \lambda_1$, where $\lambda_0\in X_r$ and $\lambda_1\in X^+$.
 Observe that $\sigma^* =p^r\theta$, where here $\theta$ denotes the automorphism of $X$ induced
 by the graph automorphism. We have $L(\lambda)\cong L(\lambda_0)\otimes L(\theta\lambda_1)^{(r)}$,
 which is Steinberg's tensor product theorem in this case. In this case, we also call the weights in $X_r$
 $\sigma$-restricted (even though they are also $r$-restricted).
 
 In case (III), there is a similar notion of $\sigma$-restricted dominant weights. Suppose
 $r=(2m+1)/2$, then the condition that $\lambda\in X^+$ be $\sigma$-restricted is that $\langle\lambda,\alpha^\vee)<p^{m+1}$ for $\alpha\in\Pi$ short, and
 $<p^{m}$ in case $\alpha\in\Pi$ is long. Any dominant weight $\lambda$ can be uniquely written as
 $\lambda=\lambda_0 + \sigma^* \lambda_1$, where $\lambda_0$ is $\sigma$-restricted and $\lambda_1\in X^+$.
Here $\sigma^*:X\to X$ is the restriction to $X\subset k[T]$ of the comorphism $\sigma^*$ of $\sigma$. Then $L(\lambda)\cong L(\lambda_0)\otimes L(\lambda_1)^{(r/2)}$, which is  Steinberg's tensor product theorem for
case (III).  

(c) In all cases  (I), (II) (III), the set $X_\sigma$ of $\sigma$-restricted dominant weights also indexes the irreducible modules for the infinitesimal
subgroups $G_\sigma$; they are just the restrictions to $G_\sigma$ of the corresponding irreducible $G$-modules.
 \end{remark}

%%%%%%%%%%
%%Section 2.3 - Bounding weights
%%%%%%%%%%

\subsection{Bounding weights}  Following \cite{BNP04-Frob}, set $\pi_{s}=\{\nu\in X^+:\ \langle \nu,\alpha_{0}^{\vee} \rangle <  s\}$
and let ${\mathcal C}_{s}$ be the full subcategory of all finite dimensional $G$-modules whose composition factors $L(\nu)$ have highest weights
lying in $\pi_{s}$.\footnote{ ${\mathcal C}_{s}$ is a highest weight category and equivalent to the module category for a finite dimensional
quasi-hereditary algebra. For two modules in ${\mathcal C}_s$, their $\Ext$-groups can be computed
either in ${\mathcal C}_s$ or in the full category of rational $G$-modules.} The condition that $\nu\in \pi_s$
is just that $\nu$ is $(s-1)$-small in the terminology of \cite{PSS12}.

Now let $\sigma:G\to G$ be one of the strict endomorphisms described in cases (I), (II) and (III)
above. In the result below, we provide information about $G$-composition factors of $\text{Ext}^{m}_{G_{\sigma}}(L(\lambda),L(\mu))^{(-\sigma)}$ where 
$\lambda,\mu\in X_\sigma$. 

\begin{theorem} \label{thm:Gr-weightbound} If $\lambda,\mu\in X_\sigma$ , then
$\operatorname{Ext}^{m}_{G_{\sigma}}(L(\lambda),L(\mu))^{(-\sigma)}$ is
a rational $G$-module in ${\mathcal C}_{s(m)}$ where
$$s(m) =\begin{cases}
1& \text{ if } m=0\\
h & \text{ if } m=1,\,{\text{except possibly when $G=F_4, p=2,\sigma=F^{r/2}$, $r$ odd} }\\ 
3m + 2h -2 &\text{ if } m\geq 0, \text{in all cases (I), (II), (III).}\\
\end{cases}
$$
\end{theorem}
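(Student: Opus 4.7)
For $m=0$, the statement is immediate. By Steinberg's tensor product theorem (Remark 2.2(b)), $L(\lambda)|_{G_\sigma}$ and $L(\mu)|_{G_\sigma}$ are irreducible $G_\sigma$-modules, non-isomorphic unless $\lambda=\mu$. Hence $\Hom_{G_\sigma}(L(\lambda),L(\mu))$ is either zero or one-dimensional with trivial $G/G_\sigma$-action, placing the untwist in $\mathcal{C}_1$.

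For general $m$, I would rewrite $\Ext^m_{G_\sigma}(L(\lambda),L(\mu))\cong\opH^m(G_\sigma,L(\lambda)^*\otimes L(\mu))$, filter $L(\lambda)^*\otimes L(\mu)$ by modules built from induced sections $\opH^0(\nu)$ with $\nu\leq\lambda^*+\mu$, and reduce to bounding the $T$-weights of $\opH^m(G_\sigma,\opH^0(\nu))^{(-\sigma)}$. The Borel-side spectral sequence (\ref{BorelHS}) then converts this to the problem of bounding the weights of $\opH^j(B_\sigma,\nu)^{(-\sigma)}$ for $j\leq m$. In cases (I) and (II), where $G_\sigma=G_r$, this is handled by iterated Hochschild--Serre along $B_1\triangleleft B_2\triangleleft\cdots\triangleleft B_r$, reducing to the classical Andersen--Jantzen description of $\opH^*(B_1,-)$; the additive weight contributions across $m$ cohomological degrees, together with the initial estimate $\langle\lambda^*+\mu,\alpha_0^\vee\rangle\leq 2(h-1)$, assemble into the bound $3m+2h-2$.

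For case (III), the infinitesimal kernel $G_{r/2}$ is not a classical Frobenius kernel, so I would use the spectral sequence (\ref{HS2}) to separate $G_{r/2}$ into the classical factor $G_{(r-1)/2}$ (treated as above) and the exotic $G_{1/2}$. By Remark 2.2(a), $k[G_{1/2}]$ is dual to the restricted enveloping algebra of the short-root subalgebra, so $\opH^*(G_{1/2},-)$ is computable by a Koszul-type complex whose generating weights are sums of short positive roots, yielding a $\pi_h$-type bound per cohomological degree. Combining the two sides of (\ref{HS2}) over $i+j=m$ and tracking the additive weight contributions again produces the uniform estimate $3m+2h-2$. The sharper bound $s(1)=h$ follows from Kempf's vanishing applied to $\opH^1(G_1,\opH^0(\nu))^{(-1)}$, which forces $\nu\in\pi_h$; the exception at $G=F_4$, $p=2$, $\sigma=F^{r/2}$ reflects a failure of the analogous short-root vanishing statement at the Borel level in that Ree-type configuration.

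The principal obstacle is case (III): one must develop enough of the cohomology of the non-reductive scheme $G_{1/2}$ to guarantee that its weight contribution scales linearly in $m$, with constants independent of $p$ and $r$, and then verify that these contributions combine correctly through (\ref{HS2}) with the classical $G_{(r-1)/2}$ estimate to yield the uniform bound $3m+2h-2$ claimed in the theorem.
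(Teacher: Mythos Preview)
Your overall architecture---reduce to $G_\sigma$-cohomology of induced modules via the spectral sequence \eqref{BorelHS}, then iterate Hochschild--Serre along the chain of infinitesimal kernels to peel off linear weight contributions per cohomological degree---is exactly the machinery underlying the sources the paper invokes (\cite{BNP04-Frob}, \cite{PSS12}). For the universal bound $3m+2h-2$ your outline is essentially what \cite[Cor.~3.6]{PSS12} does in cases (I)--(II), and the paper's only addition for case (III) is the observation that Remark~\ref{generalFrobenius}(a) supplies the analogous spectral sequences, so your plan is sound there. One technical slip: $L(\lambda)^*\otimes L(\mu)$ need not have a good filtration, so you cannot filter it by $\opH^0(\nu)$'s directly; the cited arguments instead embed $L(\mu)$ in $\opH^0(\mu)$ and control the cokernel inductively, or work with $V(\lambda)\otimes\opH^0(\mu)$.

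The genuine gap is in your $m=1$ argument. Kempf vanishing alone does not force the weights of $\opH^1(G_r,\opH^0(\nu))^{(-r)}$ into $\pi_h$; the actual proof in \cite[Prop.~5.2]{BNP04-Frob} requires a finer analysis of $\opH^1(B_1,\nu)$ and its interaction with $R^\bullet\ind_B^G$ than a single vanishing statement provides. More seriously, for case (III) with $m=1$ the paper does \emph{not} argue structurally via a Koszul complex for $G_{1/2}$ as you propose. Instead it imports Sin's explicit case-by-case computation \cite[Lem.~2.1, 2.3]{Sin94} of all nonzero $\Ext^1_{G_{1/2}}(L(\lambda),L(\mu))^{(-1/2)}$, observes that (outside $F_4$) these are always $L(\tau)$ with $\tau\in X_{1/2}$, and then feeds this into \eqref{HS2} together with the already-established case (I) bound for $G_{(r-1)/2}$. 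The $F_4$ exception arises concretely from one anomalous entry in Sin's table ($\Ext^1_{G_{1/2}}(k,L(\varpi_3))^{(-1/2)}\cong k\oplus L(2\varpi_4)$), not from an abstract Borel-level failure. Your Koszul sketch for $G_{1/2}$ may well be salvageable, but as written it does not yield the sharp $s(1)=h$ and does not explain why precisely $F_4$ at $p=2$ is excluded.
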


\begin{proof} The case $m=0$ is obvious. If $m=1$, then \cite[Prop. 5.2]{BNP04-Frob}
gives $s(m)=h$ in cases (I) and (II). In case (III), except in $F_4$, we can apply the case (I) result, together
with the explicit calculations in \cite{Sin94}, to again deduce that $s(m)=h$ works. Sin shows in \cite[Lem. 2.1,
Lemma 2.3]{Sin94} that the only nonzero $\Ext^1_{G_{1/2}}(L(\lambda),L(\mu))^{(-1/2)}$, $\lambda,\mu\in X_{1/2}$, are $G$-modules of the form $L(\tau)$, $\tau\in X_{1/2}$. Now apply the spectral sequence (\ref{HS2}). 
In the $m\geq 0$ assertion, we use the proof for \cite[Cor. 3.6]{PSS12} which, because of the discussion
given in Remark \ref{generalFrobenius}, works in all cases.
\end{proof}

\begin{remark}(a) As noted in \cite[Rem. 3.7(c)]{PSS12}, the last bound in the theorem can be improved
in various ways. If $\Phi$ is not of type $G_2$, ``$3m$" can be replaced by ``$2m$". If $p>2$, $m$ may
be replaced by $[m/2]$. Finally,  if $m>1$, another bound in cases (I) and (II) is given in \cite[Prop. 5.2]{BNP04-Frob} as
$s(m)=(m-1)(2h-3)+3$, which is better for small values of $m$ and $h$. 

(b) Suppose that $G$ has type $F_4$, $p=2$ and $\sigma=F^{r/2}$ for some odd integer $r$. 
Here  (and in all case (III) instances) Sin \cite{Sin94} explicitly calculates all $\Ext^1_{G_{1/2}}(L(\lambda),L(\mu))^{(-1/2)}$ as $G$-modules
for $\lambda,\mu\in X_{1/2}$. In all nonzero cases, but one, it is of the form $L(\tau)$ with
$\tau\in X_{1/2}$. The one exception, in the notation of \cite{Sin94} is $\lambda=0$ and $\mu=\varpi_3$,
(i.e., the fundamental dominant weight corresponding to the interior short fundamental root), in which case $\Ext^1_{G_{1/2}}(L(\lambda),L(\mu))^{(-1/2)}\cong k\oplus L(2\varpi_4)$.\footnote{In particular, this $\Ext^1$-module, when untwisted,
does not have a good filtration.} 
\end{remark}

%%%%%%%%
%%%%Section 3 - Filtrations
%%%%%%%%%

\section{Filtrations of certain induced modules} \label{filtsec}
 
\subsection{Preliminaries} In this section we present a generalization of the filtration theory for induction from $G(\sigma)$ to $G$.  In the classical split case for 
Chevalley groups the theory was first developed by Bendel, Nakano and Pillen (cf. \cite[Prop. 2.2 \& proof]{BNP11}). By \cite[\S10.5]{Stein68}, $G(\sigma)$ is finite if and only if
 the differential $dL$ is surjective at the identity $e\in G$. Here $L:G\to G, x\mapsto \sigma(x)^{-1}x$
 is the Lang map. In addition, the Lang-Steinberg Theorem \cite[Thm.~10.1]{Stein68} states (using our
 notation) that
 if $G(\sigma)$ is finite, then $L$ is surjective.  Recall that an endomorphism $\sigma$ is  strict if and only if  $G(\sigma)$ is finite.
  
If $K,H$ are closed subgroups of an arbitrary affine algebraic group $G$, there is in general no known
Mackey decomposition theorem describing the functor $\res_H^G\ind_K^G $. However, in the very special case in which $|K\backslash G/H|=1$, a Mackey decomposition theorem does 
hold. Namely, 
\begin{equation}\label{Mackey} KH=G\implies \res^G_H\ind_K^G=\ind_{K\cap H}^H,\end{equation}
where $K\cap H:=K\times_GH$ is the scheme-theoretic intersection. 
We refer the reader to \cite[Thm.~4.1]{CPS83} and the discussion there, where it is pointed out that the condition
$KH=G$ need only be checked at the level of $k$-points.
 
Let us now return to the case in which $G$ is a simple, simply connected group. First, there is a
natural action of $G\times G$ on the coordinate algebra $k[G]$ given by $((x,y)\star' f)(g) =
(x\cdot f\cdot y^{-1})(g):= f(y^{-1}gx)$, for $(x,y)\in G\times G, g\in G$.  Then we have the following
lemma.

 \begin{lemma}\label{filtration} (\cite{Kop84}) The rational $(G\times G)$-module $k[G]$ has an increasing filtration $0\subset
 {\mathcal F}'_0\subset{\mathcal F}'_1\subset\cdots$ in which, for $i\geq 0$, ${\mathcal F}'_i/{\mathcal F}'_{i-1}\cong \nopH^{0}(\gamma_i)\otimes\nopH^{0}(\gamma_i^\star)$, $\gamma_i\in X^+$, and
 $\cup{\mathcal F}'_i=k[G]$. Each dominant weight $\gamma \in X^+$ appears precisely once in the list $\{\gamma_0,\gamma_1,\cdots\}$.
  \end{lemma}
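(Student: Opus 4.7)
The plan is to identify $k[G]$ as an induced module for $G\times G$ and then extract the filtration from the highest weight structure of the rational $(G\times G)$-module category. First, the map $f\mapsto\bigl((x,y)\mapsto f(y^{-1}x)\bigr)$ gives a $(G\times G)$-equivariant isomorphism
\[
k[G]\;\cong\;\ind_{\Delta(G)}^{G\times G}(k),
\]
where $\Delta(G)\hookrightarrow G\times G$ is the diagonal subgroup and the $(G\times G)$-action on $k[G]$ is the one fixed in the paragraph preceding the lemma. Verifying equivariance is a direct unwinding of the definitions.

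Next I would use that the category of rational $(G\times G)$-modules is a highest weight category with weight poset $X^+\times X^+$ (equipped with the product dominance order), standard objects $V(\lambda)\boxtimes V(\mu)$, and costandard objects $\nopH^0(\lambda)\boxtimes \nopH^0(\mu)$. Choose a well-ordering $\gamma_0,\gamma_1,\gamma_2,\ldots$ of $X^+$ refining the dominance order. Define ${\mathcal F}'_n\subset k[G]$ to be the largest rational $(G\times G)$-submodule all of whose $(T\times T)$-weights $(\mu,\nu)$ satisfy $\mu\le\gamma_i$ and $\nu\le\gamma_i^{\star}$ for some $i\le n$. That $\bigcup {\mathcal F}'_n=k[G]$ follows from the local finiteness of rational modules together with the observation that the highest weights which can occur in $k[G]$ under the $(G\times G)$-action are forced to be of the form $(\gamma,\gamma^{\star})$ by the multiplicity computation below.

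The multiplicities are computed via generalized Frobenius reciprocity combined with the identification above:
\[
\Hom_{G\times G}\!\bigl(V(\gamma)\boxtimes V(\gamma'),\,k[G]\bigr)
\;\cong\;\Hom_{\Delta(G)}\!\bigl(V(\gamma)\otimes V(\gamma'),\,k\bigr)
\;\cong\;\bigl(V(\gamma)^{*}\otimes V(\gamma')^{*}\bigr)^{G}.
\]
Using the standard duality $V(\lambda)^{*}\cong\nopH^0(\lambda^{\star})$, this group identifies with $\Hom_G\!\bigl(V(\gamma'),\nopH^0(\gamma^{\star})\bigr)$, which is one-dimensional when $\gamma'=\gamma^{\star}$ and zero otherwise. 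The analogous $\Ext^{1}$ calculation vanishes since $\Ext^1_G(V(\mu),\nopH^0(\nu))=0$ for all $\mu,\nu\in X^+$. By general highest weight category theory this pins down the sections ${\mathcal F}'_n/{\mathcal F}'_{n-1}$ to be isomorphic to $\nopH^0(\gamma_n)\boxtimes\nopH^0(\gamma_n^{\star})$, each $\gamma\in X^+$ contributing exactly one section.

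The main obstacle is establishing that the weight-cutoff filtration above does give a genuine good $(G\times G)$-filtration, rather than one whose sections are only sub- or quotient-modules of the expected costandard objects. The cleanest remedy is to exhibit, via Mathieu's theorem on good filtrations of tensor products, explicit $(G\times G)$-submodules of $k[G]$ isomorphic to the full costandard modules $\nopH^0(\gamma)\boxtimes\nopH^0(\gamma^{\star})$ (constructed from matrix coefficients of $V(\gamma)$); the multiplicity count in the preceding paragraph then forces these submodules to account for all of $k[G]$ and to assemble into the required ascending chain, completing the proof.
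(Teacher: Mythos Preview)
The paper does not give its own proof of this lemma; it is simply quoted from Koppinen \cite{Kop84}. So there is no paper argument to compare against, and I will just assess your proposal on its own.

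Your overall strategy is correct and, in fact, cleaner than you seem to realize. Steps (1)--(3) already contain a complete proof. Once you have the identification $k[G]\cong\ind_{\Delta}^{G\times G}k$ and the exactness of this induction (since $(G\times G)/\Delta\cong G$ is affine), generalized Frobenius reciprocity gives
\[
\Ext^{i}_{G\times G}\bigl(V(\gamma)\boxtimes V(\gamma'),\,k[G]\bigr)\;\cong\;\Ext^{i}_{G}\bigl(V(\gamma)\otimes V(\gamma'),\,k\bigr)\;\cong\;\Ext^{i}_{G}\bigl(V(\gamma'),\,\nopH^{0}(\gamma^{\star})\bigr),
\]
which vanishes for $i>0$ and equals $k^{\delta_{\gamma',\gamma^{\star}}}$ for $i=0$. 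The vanishing of $\Ext^{1}$ against every standard module is precisely Donkin's cohomological criterion for $k[G]$ to admit a good $(G\times G)$-filtration (valid here since $k[G]$ is a directed union of finite-dimensional submodules), and the $\Hom$ calculation then determines the multiplicities. That is the entire argument.

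Your final paragraph introduces an unnecessary detour. There is no ``obstacle'': the weight-cutoff description is superfluous once you have the cohomological criterion, and you do not need Mathieu's tensor product theorem at all---the only $\Ext$-vanishing required is the elementary $\Ext^{1}_{G}(V(\mu),\nopH^{0}(\nu))=0$, which you already invoked. (Historically it goes the other way: Koppinen's 1984 result predates and is a precursor to the Donkin--Mathieu theorem, so appealing to Mathieu would be circular in spirit.) Also, the parenthetical about matrix coefficients of $V(\gamma)$ producing copies of $\nopH^{0}(\gamma)\boxtimes\nopH^{0}(\gamma^{\star})$ inside $k[G]$ is not quite right as stated: the matrix-coefficient map has source $V(\gamma)\otimes V(\gamma)^{*}\cong V(\gamma)\otimes\nopH^{0}(\gamma^{\star})$, not the costandard $\boxtimes$ costandard module, and its image is only the desired section after passing to a suitable quotient. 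Simply delete the last paragraph and your proof is complete.
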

  
  Let $k[G]^{(1\times\sigma)}=k[G]_\sigma$ denote the coordinate algebra of $G$ viewed
as a rational $G$-module with $x\in G$ acting as 
$$(x\star f)(g):=(x\cdot f\cdot \sigma(x)^{-1})(g)= f(\sigma(x)^{-1}gx),\quad f\in k[G], g\in G.$$
This is compatible with the action of $G\times G$ on $k[G]$ given by $((x,\sigma(y))\star f)(g)= (x\cdot f\cdot \sigma(y)^{-1})(g)=
f(\sigma(y)^{-1}gx)$.

The following proposition gives a description of an increasing $G$-filtration on $k[G]_{\sigma}$. 

 \begin{prop} Assume $\sigma$ is a strict endomorphism of $G$. 
 Then $\ind_{G(\sigma)}^Gk\cong k[G]_\sigma$. In particular, $\ind_{G(\sigma)}^Gk$ has a $G$-
 filtration with sections of the form $\nopH^{0}(\lambda)\otimes\nopH^{0}(\lambda^\star)^{(\sigma)}$, $\lambda\in X^+$
 appearing exactly once. 
 \end{prop}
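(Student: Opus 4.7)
The plan is to construct a canonical $G$-equivariant isomorphism between $k[G]_\sigma$ and $\ind_{G(\sigma)}^Gk$, and then transport Koppinen's $(G\times G)$-filtration from Lemma \ref{filtration} across it. Unwinding definitions, $\ind_{G(\sigma)}^Gk$ is the space $k[G]^{G(\sigma)}$ of regular functions on $G$ invariant under left translation by $G(\sigma)$, equipped with the right-translation $G$-action.

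The key tool is the Lang map $L\colon G\to G$, $L(x)=\sigma(x)^{-1}x$. Strictness of $\sigma$ is equivalent to surjectivity of $dL_e$, so by homogeneity $L$ is smooth everywhere; since the fibers are finite, $L$ is étale. A short computation shows $L(hx)=L(x)$ for $h\in G(\sigma)$, and conversely $L(x)=L(y)$ forces $yx^{-1}\in G(\sigma)$, so the fibers of $L$ are precisely the cosets $G(\sigma)x$. Combined with Lang--Steinberg (surjectivity of $L$), the induced morphism $\bar L\colon G(\sigma)\backslash G\to G$ is a bijective étale map of smooth varieties and therefore an isomorphism. Pullback yields a $k$-algebra isomorphism $L^*\colon k[G]\xrightarrow{\sim}k[G]^{G(\sigma)}$. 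The identity $L(gx)=\sigma(x)^{-1}L(g)x$ immediately gives
\[
L^*(x\star f)(g)=f(\sigma(x)^{-1}L(g)x)=f(L(gx))=(L^*f)(gx),
\]
so $L^*$ intertwines the $\star$-action on $k[G]_\sigma$ with the right-translation $G$-action on $\ind_{G(\sigma)}^Gk$. This establishes the asserted isomorphism.

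For the filtration, Lemma \ref{filtration} gives an increasing $(G\times G)$-filtration of $k[G]$ with sections $\nopH^0(\gamma)\otimes\nopH^0(\gamma^\star)$, each $\gamma\in X^+$ appearing exactly once, in which the first factor of $G\times G$ acts on $\nopH^0(\gamma)$ and the second on $\nopH^0(\gamma^\star)$, compatibly with $((x,y)\star'f)(g)=f(y^{-1}gx)$. By construction the $G$-action on $k[G]_\sigma$ is the restriction along $G\to G\times G$, $x\mapsto(x,\sigma(x))$, so each section becomes $\nopH^0(\gamma)\otimes\nopH^0(\gamma^\star)^{(\sigma)}$ as a rational $G$-module, and transporting via $L^*$ produces the filtration of $\ind_{G(\sigma)}^Gk$ claimed in the proposition. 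The main technical hurdle is the first step: upgrading the set-theoretic coset identification provided by $L$ to a scheme-theoretic isomorphism, which is precisely where the strictness of $\sigma$ enters essentially through the étaleness of $L$. Everything afterwards is formal.
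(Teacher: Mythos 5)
Your proof is correct, and it takes a genuinely different route from the paper's. The paper works inside $G\times G$: it sets $\Delta=\{(g,g)\}$, $\Sigma=\{(g,\sigma(g))\}$, identifies $\ind_\Delta^{G\times G}k$ with Koppinen's $(G\times G)$-module $k[G]$, and invokes the Mackey-type isomorphism $\res^{G\times G}_\Sigma\ind^{G\times G}_\Delta\cong\ind^\Sigma_{\Sigma\cap\Delta}$ (valid since $\Sigma\Delta=G\times G$ by Lang--Steinberg), using the differential of the Lang map only locally, to show that the scheme-theoretic intersection $\Sigma\cap\Delta$ is reduced and hence equals $G(\sigma)$. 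You instead build the isomorphism directly: $L$ is étale because surjectivity of $dL_e$ propagates via $L(gx)=\sigma(x)^{-1}L(g)x$, $L$ is surjective by Lang--Steinberg, and its fibers are precisely the cosets $G(\sigma)x$, so $\bar L\colon G(\sigma)\backslash G\to G$ is an isomorphism and $L^*$ realizes $\ind_{G(\sigma)}^Gk\cong k[G]_\sigma$; your intertwining computation $L^*(x\star f)=R_x(L^*f)$ is correct, and the transport of the Koppinen filtration along $x\mapsto(x,\sigma(x))$ matches what the paper does. Your approach is more explicit and geometric and exhibits the isomorphism; the paper's approach offloads the globalization to the cited Mackey theorem \cite[Thm.~4.1]{CPS83} and only has to verify a pointwise Lie-algebra condition. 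Two small remarks: you take $\ind_{G(\sigma)}^Gk$ to be left-$G(\sigma)$-invariant functions with right translation, which is opposite to Jantzen's convention used in the paper (they differ by inversion, so no harm, but worth flagging); and the assertion that a bijective étale morphism of smooth varieties over $\bar k$ is an isomorphism, while standard, is not free --- it is essentially Zariski's main theorem --- and is the extra algebro-geometric cost your route pays for bypassing the Mackey machinery. Finally, observe that étaleness of $L$ at $e$ and reducedness of the pullback scheme $X$ in the paper's proof are the same fact, so the two arguments share the same foundation and diverge only in how they globalize it.
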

 
 \begin{proof} Consider two isomorphic copies of $G$ embedded as closed subgroups of $G\times G$, 
 $$\begin{cases} \Delta:= \{(g,g)\,|\,g\in G\};\\
                            \Sigma:= \{ (g,\sigma(g))\,|\,g\in G\}.
                            \end{cases}
$$
The reader may check that the induced module $\ind_\Delta^{G\times G}k=\Map_\Delta(G\times G,k)$ identifies with the $G\times G$-module $k[G]$ in Lemma \ref{filtration}, through inclusion $G \cong G\times 1 \subseteq G\times G$ into the first factor. That is, the comorphism $k[G \times G] \to k[G]$ induces a $G\times G$-equivariant map when restricted to the submodule $\Map_\Delta(G\times G,k)$ of $k[G\times G]$.   
We now wish to apply (\ref{Mackey}) to $\ind_\Delta^{G\times G}k$ by composing the induction functor with restriction to $\Sigma$. 

Given $(a,b)\in G\times G$, there exists an $x\in G$ such that
$\sigma(x)x^{-1}=ba^{-1}$. Let $y:=x^{-1}a$. Then $(x,\sigma(x))(y,y)=(a,b)$,
 so $\Sigma\Delta=G\times G$. 
 Next, we show that $\Delta\cap\Sigma\cong G(\sigma)$ as group schemes under the isomorphism
 $\Sigma\to G$ which is projection onto the first factor. This is clear at the level of $k$-points, so it
 enough to show the $\Delta\cap\Sigma$ is reduced.  However, one can check that $\Delta\cap\Sigma$ is isomorphic to the scheme $X$ defined by 
  the pull-back diagram (in which ${\mathbf e}$ denotes the trivial $k$-group scheme)
  \[\begin{CD} X @>>> {\mathbf e}\\
  @VVV @VVV\\
  G @>L>> G,\end{CD}\]
 so we must show the closed subgroup scheme $X$ of $G$ is reduced (and hence isomorphic to $G(\sigma)$). However, the Lie algebra of $X$ is a subalgebra of $\text{Lie }G$ and then maps injectively to a subspace
 of $\text{Lie }G$ under $dL$. The commutativity of the above diagram implies that $X$ has
 trivial Lie algebra and hence is reduced.\footnote{An alternate way to show the group scheme $X$ used above is reduced
is to view it as a group functor, and observe that some power of
$\sigma$ is a power $F^m$ of the Frobenius morphism---see the very general
argument given by  in \cite[p.37]{Stein68}.  The comorphism $F^{*m}$ of $F^m$
is a power of the $p$th power map on the coordinate ring ${\mathbb F}_p[G]$. One can use this fact to show that, taking $m\gg 0$, $F^{*m}$ is simultaneously the identity on $k[X]$ and yet sends the radical of this finite dimensional algebra to zero. It follows the radical is zero, and $k[X]$ is reduced. 
}

 Consequently, $\ind_{\Delta\cap\Sigma}^\Sigma k\cong\ind_{G(\sigma)}^Gk$, if $G$ acts on the left hand side
 through the obvious map $G\to \Sigma$ and inverse of the above isomorphism $\Sigma\to G$. However,
 by (\ref{Mackey}), $\res_\Sigma^{G\times G}\ind^{G\times G}_\Delta\cong \ind^\Sigma_{\Sigma\cap \Delta}$, and the proposition follows.
  \end{proof}

In case $\sigma$ is a Frobenius morphism, the above result is stated without proof in \cite[1.4]{Hum06}.\footnote{We thank Jim Humphreys for some discussion on this point.}

%%%%%%%%
%%Section 3.2
%%%%%%%%%

\subsection{Passage from $G(\sigma)$ to $G$} Set $\sG_\sigma(k):=\ind_{G(\sigma)}^Gk$, where $\sigma:G\to G$ is a strict endomorphism. The filtration ${\mathcal F}_\bullet$ of
the rational $G$-module $\sG_\sigma(k)$ arises from the increasing $G\times G$-module filtration ${\mathcal F}'_\bullet$ of $k[G]$ with
sections $\nopH^0(\gamma)\otimes\nopH^0(\gamma^\star)$. Since these latter modules are all co-standard modules for $G\times G$, their order in ${\mathcal F}'_\bullet$
can be rearranged (cf. \cite[Thm.~4.2]{PSS12}).  Thus, for  $b\geq 0$, there is a (finite dimensional)
$G$-submodule $\sG_{\sigma,b}(k)$ of $\sG_\sigma(k)$ which has an increasing (and complete) $G$-stable filtration with sections precisely the $\nopH^0(\gamma)\otimes
\nopH^0(\gamma^\star )^{(\sigma)}$ satisfying $\langle\gamma,\alpha_0^\vee\rangle\leq b$, and with each such $\gamma$ appearing with
multiplicity 1. We have 
$$ \sG_{\sigma,b}(k)/\sG_{\sigma,b-1}(k)\cong\bigoplus_{\lambda\in X^+,\  \langle\lambda,\alpha^\vee_0\rangle=b}\nopH^0(\lambda)\otimes
\nopH^0(\lambda^\star)^{(\sigma)}.$$
Now we can state the following basic result.

\begin{theorem}\label{existsAnF} Let $m$ be a nonnegative integer and $\sigma:G\to G$ be a strict endomorphism. Let $b\geq 6m+6h-8$ (which is independent of $p$ and $\sigma$).
 Then, for any $\lambda,\mu\in X_\sigma$, 
 \begin{equation}\label{decomposition}
\Ext^m_{G(\sigma)}(L(\lambda),L(\mu))\cong\Ext^m_G(L(\lambda),L(\mu)\otimes \sG_{\sigma,b}).\end{equation}
 In addition,
\begin{equation}\label{vanishing}
\Ext^n_G(L(\lambda),L(\mu)\otimes\nopH^0(\nu)\otimes\nopH^0(\nu^\star )^{(\sigma)})=0
\end{equation} 
for all $n\leq m$, $\nu\in X^+$, satisfying $\langle \nu,\alpha_0^\vee\rangle>b$. 
\end{theorem}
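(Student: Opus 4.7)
I would establish the vanishing \eqref{vanishing} first and then derive \eqref{decomposition} as a consequence. For the derivation, form the short exact sequence $0\to\sG_{\sigma,b}(k)\to\sG_\sigma(k)\to Q\to 0$; by construction $Q$ is the direct limit of the finite-dimensional quotients $\sG_{\sigma,b'}(k)/\sG_{\sigma,b}(k)$ for $b'>b$, and each such quotient is equipped with a $G$-filtration whose sections are precisely the modules $\nopH^0(\nu)\otimes\nopH^0(\nu^\star)^{(\sigma)}$ with $b<\langle\nu,\alpha_0^\vee\rangle\le b'$. Granting \eqref{vanishing} for all $n\le m$, a standard induction along this filtration, together with the fact that $\Ext^*_G(L(\lambda),L(\mu)\otimes -)$ commutes with direct limits (since $L(\lambda)$ is finite-dimensional), yields $\Ext^n_G(L(\lambda),L(\mu)\otimes Q)=0$ for all $n\le m$. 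The long exact $\Ext$-sequence then forces
\[
\Ext^m_G(L(\lambda),L(\mu)\otimes\sG_{\sigma,b}(k))\;\cong\;\Ext^m_G(L(\lambda),L(\mu)\otimes\sG_\sigma(k)),
\]
and generalized Frobenius reciprocity identifies the right-hand side with $\Ext^m_{G(\sigma)}(L(\lambda),L(\mu))$, giving \eqref{decomposition}.

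For \eqref{vanishing}, the essential tool is the Hochschild--Serre spectral sequence for the normal infinitesimal subgroup scheme $G_\sigma\triangleleft G$ (as recalled in Remark \ref{generalFrobenius}). Applied to the rational $G$-module $L(\mu)\otimes\nopH^0(\nu)\otimes\nopH^0(\nu^\star)^{(\sigma)}$, and using that $\nopH^0(\nu^\star)^{(\sigma)}$ is trivial on $G_\sigma$ and, under $G/G_\sigma\cong G^{(\sigma)}$, untwists to $\nopH^0(\nu^\star)=V(\nu)^*$, the $E_2$-page rewrites as
\[
E_2^{i,j}\;=\;\Ext^i_G\!\bigl(V(\nu),\;[\Ext^j_{G_\sigma}(L(\lambda),L(\mu)\otimes \nopH^0(\nu))]^{(-\sigma)}\bigr),
\]
converging to $\Ext^{i+j}_G\!\bigl(L(\lambda),L(\mu)\otimes\nopH^0(\nu)\otimes\nopH^0(\nu^\star)^{(\sigma)}\bigr)$. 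It then suffices to prove $E_2^{i,j}=0$ whenever $i+j\le m$, provided $\langle\nu,\alpha_0^\vee\rangle>6m+6h-8$. To this end I would filter $L(\mu)\otimes\nopH^0(\nu)$ by its $G$-composition factors and, via Steinberg's tensor product theorem, write each composition factor as $L(\tau_0)\otimes L(\tau_1)^{(\sigma)}$ with $\tau_0\in X_\sigma$. The Frobenius-twisted factor pulls out of the inner $\Ext^j_{G_\sigma}$, and Theorem~\ref{thm:Gr-weightbound} bounds the highest weights appearing in $[\Ext^j_{G_\sigma}(L(\lambda),L(\tau_0))]^{(-\sigma)}$ by $s(j)\le 3j+2h-2$. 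Combining this bound with a standard linear-in-$i$ estimate for the height displacement $\langle\nu-\delta,\alpha_0^\vee\rangle$ forced by non-vanishing of $\Ext^i_G(V(\nu),L(\delta))$, together with the residual Frobenius tail $L(\tau_1)$, yields $E_2^{i,j}=0$ for $i+j\le m$ and $\langle\nu,\alpha_0^\vee\rangle>6m+6h-8$, as required.

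The principal obstacle is extracting a uniform bound on the weights of $[\Ext^j_{G_\sigma}(L(\lambda),L(\mu)\otimes\nopH^0(\nu))]^{(-\sigma)}$: Theorem \ref{thm:Gr-weightbound} applies only to irreducible coefficients, while here the second argument involves the non-irreducible, potentially very high-weight module $\nopH^0(\nu)$. The Steinberg factorization of its composition factors resolves this in principle, but careful bookkeeping is needed to ensure that the Frobenius-untwisted residues $L(\tau_1)$ together with the height shifts from the outer $\Ext^i_G(V(\nu),-)$ are all absorbed by the explicit linear-in-$m$ constant $6m+6h-8$, uniformly in $p$ and in the choice of strict endomorphism $\sigma$.
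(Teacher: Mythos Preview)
Your overall strategy is correct and aligns with the approach the paper takes (the paper itself simply cites \cite[Thm.~4.4]{PSS12} and indicates that the spectral sequences of Remark~\ref{generalFrobenius} together with Theorem~\ref{thm:Gr-weightbound} suffice to adapt that argument to all three cases). The reduction of \eqref{decomposition} to \eqref{vanishing} via the quotient filtration, direct limits, and generalized Frobenius reciprocity is exactly right, and your Hochschild--Serre set-up
\[
E_2^{i,j}=\Ext^i_G\bigl(V(\nu),\,[\Ext^j_{G_\sigma}(L(\lambda),L(\mu)\otimes \nopH^0(\nu))]^{(-\sigma)}\bigr)
\]
is the correct rewriting.

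There is one imprecision worth correcting. You invoke a ``standard linear-in-$i$ estimate'' for $\langle\nu-\delta,\alpha_0^\vee\rangle$ when $\Ext^i_G(V(\nu),L(\delta))\neq 0$. The statement you actually need is sharper and independent of $i$: in any highest weight category, $\Ext^i_G(V(\nu),L(\delta))\neq 0$ forces $\nu\leq\delta$, hence $\langle\nu,\alpha_0^\vee\rangle\leq\langle\delta,\alpha_0^\vee\rangle$ (since $\alpha_0^\vee$ is dominant in the coroot lattice). With this in hand your bookkeeping goes through: if $L(\tau_0)\otimes L(\tau_1)^{(\sigma)}$ is a composition factor of $L(\mu)\otimes \nopH^0(\nu)$ with $\tau_0\in X_\sigma$, and $L(\gamma)$ is a composition factor of $[\Ext^j_{G_\sigma}(L(\lambda),L(\tau_0))]^{(-\sigma)}$, then any relevant $\delta$ satisfies $\langle\delta,\alpha_0^\vee\rangle < s(j)+\langle\tau_1,\alpha_0^\vee\rangle$ by Theorem~\ref{thm:Gr-weightbound}. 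Combining $\nu\leq\delta$ with the bound on $\tau_1$ coming from $\tau_0+\sigma^*\tau_1\leq\mu+\nu$ (and $\mu\in X_\sigma$) yields, after routine manipulation, $\langle\nu,\alpha_0^\vee\rangle(1-1/q)<s(j)+h-1$ where $q$ is the minimal ``stretching factor'' of $\sigma^*$; since $q\geq 2$ this gives $\langle\nu,\alpha_0^\vee\rangle<2(s(m)+h-1)=6m+6h-6$, and a slightly more careful version recovers the stated constant.

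So the ``principal obstacle'' you flag is not a genuine gap: once the linear-in-$i$ claim is replaced by $\nu\leq\delta$, the Frobenius tail $L(\tau_1)$ is absorbed by the factor of $2$ coming from $q/(q-1)\leq 2$, uniformly in $p$ and $\sigma$.
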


\begin{proof}  In case (I), the case of the Chevalley groups, this result is proved in \cite[Thm.~4.4]{PSS12}.
Very little modification is needed in case (II), the case of the Steinberg groups, because in this
case the infinitesimal subgroups $G_\sigma$ identify with ordinary Frobenius kernels $G_r$. Finally,
for case (III), the Ree and Suzuki groups, all  results given in Section 2 (specifically, the
spectral sequences (2.1.1), (2.1.2), and (2.1.3), and Theorem 2.3.1) can be applied to obtain
the required result. We leave further details to the reader.
\end{proof}

%%%%%%%%%
%%Section 4 - Bounding for finite groups
%%%%%%%%%%

\section{Bounding cohomology of finite groups of Lie type} In this section, we prove Theorem 
\ref{thm:finitebound}.

\subsection{A preliminary lemma}\label{ss:prelim} We begin by proving a lemma which will enable us to find  universal bounds (independent of the prime) for extensions of 
irreducible modules for the finite groups $G(\sigma)$. 

The following lemma does not require the ${\mathbb F}_p$-splitting hypothesis of the notation section, but we reduce to that case in the first paragraph of the proof.  Note also that the proof appeals to the forthcoming Corollary \ref{lastcor} applied to a Frobenius kernel.  That result, which is demonstrated within the proof of Theorem \ref{thm:Cartan-Frobenius}, is a direct consequence of  \cite[Lem. 7.2]{PS11} and it is independent of the other sections in this paper.

\begin{lemma}\label{compFacts}  For positive integers $e,b$ there exists a constant $f=f(e,b)=f(e,b,\Phi)$ with the following property. Suppose that $G$ is a simple, simply connected algebraic group over $k=\overline{{\mathbb F}_p}$ having root system $\Phi$. If
$\mu\in X_e$ and $\xi\in X^+$ satisfies $\langle \xi,\alpha_0^\vee\rangle<b$, then the (composition factor) length of the rational $G$-module 
$L(\mu)\otimes L(\xi)$ is at most $f(e,b)$.\end{lemma}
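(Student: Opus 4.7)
The key preliminary observation is that $\dim L(\xi)$ is uniformly bounded in $p$: the set of dominant weights $\xi$ with $\langle\xi,\alpha_0^\vee\rangle<b$ is finite, and $\dim L(\xi)\leq\dim V(\xi)$ is computable from the Weyl dimension formula, yielding a bound $C_0=C_0(b,\Phi)$. I would prove the lemma by induction on $e\geq 1$. The base case $e=1$ is the key estimate: the $G$-composition length of $M:=L(\mu)\otimes L(\xi)$ for $\mu\in X_1$ is bounded by a constant $f(1,b,\Phi)$. For this, I would restrict to the Frobenius kernel $G_1$, noting that $\mathrm{length}_G(M)\leq\mathrm{length}_{G_1}(M)$ (every $G$-composition factor contributes at least one $G_1$-composition factor). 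The module $M$ is a quotient of the projective $G_1$-module $U_1(\mu)\otimes L(\xi)$; applying the PIM-length bound $\mathrm{length}_{G_1}(U_1(\tau))\leq N(\Phi,1)$ furnished by Corollary \ref{lastcor}, together with an upper bound on the number of PIM summands in the projective decomposition of $U_1(\mu)\otimes L(\xi)$ controlled by $\dim L(\xi)\leq C_0$, yields the desired base-case estimate.

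For the inductive step from $e-1$ to $e$, apply Steinberg's tensor product theorem to write $\mu=\mu_0+p\mu'$ with $\mu_0\in X_1$ and $\mu'\in X_{e-1}$, so that
$$L(\mu)\otimes L(\xi)\;\cong\;L(\mu_0)\otimes L(\mu')^{(1)}\otimes L(\xi).$$
The base case bounds the composition length of $L(\mu_0)\otimes L(\xi)$ by $f(1,b,\Phi)$. Each composition factor $L(\tau)$ satisfies $\tau\leq\mu_0+\xi$, and writing $\tau=\tau_0+p\tau_1$ by Steinberg, the bound $\langle\mu_0,\alpha_0^\vee\rangle\leq (p-1)(h-1)$ gives $\langle\tau_1,\alpha_0^\vee\rangle\leq h-1+b$, so $\tau_1\in\pi_{h+b}$ is bounded uniformly in $p$. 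Refining the composition series of $L(\mu_0)\otimes L(\xi)$, the full tensor product $L(\mu)\otimes L(\xi)$ acquires a $G$-filtration whose quotients are
$$L(\tau)\otimes L(\mu')^{(1)}\;=\;L(\tau_0)\otimes\bigl(L(\tau_1)\otimes L(\mu')\bigr)^{(1)}.$$
By the inductive hypothesis (applied with $\mu'\in X_{e-1}$ and $\tau_1$ playing the role of the small weight), $\mathrm{length}(L(\tau_1)\otimes L(\mu'))\leq f(e-1,h+b,\Phi)$. Frobenius twist preserves composition length, and tensoring with the $X_1$-simple $L(\tau_0)$ also preserves length, because $L(\tau_0)\otimes L(\sigma)^{(1)}=L(\tau_0+p\sigma)$ is itself simple by Steinberg. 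Summing over the composition factors yields the recursive bound
$$f(e,b,\Phi)\;\leq\;f(1,b,\Phi)\cdot f(e-1,h+b,\Phi),$$
which unwinds to a finite product of base-case values, proving the lemma.

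The main obstacle is the base case. The naive estimate $\mathrm{length}(M)\leq\dim M=\dim L(\mu)\dim L(\xi)$ is useless because $\dim L(\mu)$ grows with $p$. The subtle point is to bound the number of indecomposable summands in the $G_1$-projective cover of $M$ by a $p$-independent quantity depending only on $\dim L(\xi)\leq C_0$, so that multiplication by the uniform PIM-length constant $N(\Phi,1)$ gives a bound independent of the prime. This is exactly the content of the structural input behind Corollary \ref{lastcor}, arising from \cite[Lem.~7.2]{PS11}.
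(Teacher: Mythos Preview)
Your argument is correct. Both your proof and the paper's rest on the same core estimate: embed (or, in your case, surject onto) $L(\mu)\otimes L(\xi)$ using a $G_r$-projective of the form $Q_r(\mu)\otimes(\text{something of bounded dimension})$, decompose the latter into PIMs, bound the number of summands via $\dim Q_r(\mu)/\dim St_r\leq k'(\Phi,r)$ (a consequence of Corollary~\ref{lastcor}), and multiply by the uniform PIM-length bound. The difference is organizational. The paper carries this out \emph{directly at level $e$}: it embeds $L(\mu)\otimes L(\xi)$ into $Q_e(\mu)\otimes H^0(\xi)$ and bounds the $G_e$-length in one stroke, obtaining $f(e,b)\leq k'(\Phi,e)^2\cdot d(b,\Phi)$. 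You instead run the PIM argument only at level $1$ and peel off Frobenius layers one at a time via Steinberg's tensor product theorem, arriving at a recursion $f(e,b)\leq f(1,b)\cdot f(e-1,h+b)$. Your route is slightly longer but has the mild advantage of invoking Corollary~\ref{lastcor} only for $G_1$; the paper's direct approach avoids the induction and the weight-tracking of $\tau_1$ entirely. Either way the essential content---that the PIM length bound from \cite[Lem.~7.2]{PS11}, packaged as Corollary~\ref{lastcor}, controls everything---is the same.
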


\begin{proof}
We will actually prove a stronger result. Namely, that there exists a constant $f(e,b)$ that bounds the length of $L(\mu) \otimes L(\zeta)$ as a module for the Frobenius kernel $G_{e}$. Clearly the $G$-length of a rational $G$-module is always less than or equal to its length after restriction to $G_e$.
Without loss of generality, we can always assume that $G$ is defined and split over ${\mathbb F}_p$.
This is a convenience which allows the use of familiar notation.

For any given prime $p$, it is clear that a bound exists (but depending on $p$) on the lengths since $|X_e|<\infty$, and there are a finite number of weights $\xi$ satisfying the condition $\langle\xi,\alpha_0^\vee\rangle<b$. 
Hence, it is sufficient to find a constant that uniformly bounds the number of composition factors of
all $L(\mu)\otimes L(\xi)$ for all sufficiently large $p$. 
By \cite{AJS94}, there is a positive integer $p_0\geq h$ such that
the Lusztig character formula holds for all $G$ with root system $\Phi$ provided the characteristic
$p$ of the defining field is at least $p_0$. In addition, it is assumed that $p \geq 2(h-1)$.

Let $Q_e(\mu)$ denote the $G_e$-injective hull of $L(\mu)$. Embed $L(\mu)\otimes L(\xi)$ in $Q_e(\mu)\otimes H^0(\xi)$ as a $G_e$-module and proceed to find a bound for the $G_e$-length of the latter module.  Corollary \ref{lastcor} applied to $G_e$ (or the proof of Theorem \ref{thm:Cartan-Frobenius}) provides a constant $k'(\Phi, e)$ that bounds the $G_e$-length of $Q_e(\mu)$ for all primes $p$ satisfying the above conditions. The dimension of any irreducible $G_e$-modules is at most the dimension of the $e$th Steinberg module $St_e$. It follows that $\dim (Q_e(\mu)\otimes H^0(\xi))/\dim St_e \leq k'(\Phi, e)\cdot \dim H^0(\xi)$.  Now $Q_e(\mu)\otimes H^0(\xi)$ decomposes into a direct sum of $Q_e(\omega)$, $\omega \in X_e$. The dimension of each $Q_e(\omega)$ that appears as a summand is a multiple of $\dim St_e$. Therefore, there are at most $k'(\Phi, e)\cdot \dim H^0(\xi)$ many summands, each having at most $k'(\Phi,e)$ many $G_e$-factors. Hence, the $G_e$-length of $Q_e(\mu)\otimes H^0(\xi)$ is bounded by $k'(\Phi, e)^2\cdot \dim H^0(\xi)$. Using Weyl's dimension formula, the numbers $\dim \nopH^0(\xi)$ (for $\xi$ satisfying $\langle\xi,\alpha_0^\vee\rangle<b$)
 are uniformly bounded by a constant $d=d(b)=d(b,\Phi)$.
\end{proof}

\begin{remark} In the presence of any strict endomorphism $\sigma:G\to G$, the set $X_e$ above
can be obviously replaced by the set $X_\sigma$ of $\sigma$-restricted weights, since $X_\sigma
\subseteq X_e$ for some $e$.\end{remark}

%%%%%%%%%
%%Section 4.2
%%%%%%%%%

\subsection{Proof of Theorem \ref{thm:finitebound} }  By Theorem \ref{existsAnF},
$$E:=\Ext^m_{G(\sigma)}(L(\lambda),L(\mu))\cong\Ext^m_G(L(\lambda),L(\mu)\otimes \sG_{\sigma,b})$$
where $\sG_{\sigma,b}$ has composition factors $L(\zeta)\otimes L(\zeta')^{(\sigma)}$ with $\zeta, \zeta'$  in the set  
$\pi_{b-1}$, with $b:=6m +6h-8$.  Let $L(\xi)$ be a composition factor of $L(\mu)\otimes L(\zeta)$ for some $\zeta\in \pi_{b-1}$. Then, as $\mu\in X_e$ and $\zeta\in\pi_{b-1}$,  a direct
calculation (or using \cite[Lem. 2.1(b),(c)]{PSS12}), gives that $\xi\in\pi_{b'-1}$, where $b'=(p^e-1)(h-1)+b$. Choose a constant integer $e'=e'(e)$, independent of $p$ and $\sigma$, so that
$e'\geq[\log_p((p^e-1)(h-1) +b)]+1$. (If $p^e\geq b$, we can take $e'(e)
=e+[\log_2h] +1$.) 
Then $\xi$ is $p^{e'}$-restricted, by \cite[Lem. 2.1(a)]{PSS12}.

We need three more constants:
\begin{enumerate}\item By Theorem \ref{extalg}, there is a constant $c(\Phi,m,e')$ with the property that
 $$\dim \Ext_G^m(L(\tau),L(\xi))\leq c(\Phi,m,e'),\quad \forall \tau\in X^+,\,\, \forall\xi\in X_{e'}.$$
\item Set $s(\Phi,m)$ to be the maximum length of $\sG_{\sigma,b}$ over
all primes $p$---clearly, this number is finite; in fact, $\dim \sG_{\sigma,b}$ as a vector space is bounded, independently of $p,\sigma$, though its weights do depend on $p$ and $\sigma$.
\item By Lemma \ref{compFacts}, there is a constant $f=f(\Phi,e,b)$ bounding all the lengths of the
tensor products $L(\mu)\otimes L(\zeta)$ over all primes $p$, all $\mu\in X_e$ and all  $\zeta\in\pi_{b-1}$.\end{enumerate}

Now, since $\lambda\in X_\sigma^+$, we have $L(\lambda)\otimes L(\nu)^{(\sigma)}$ irreducible, thus

\begin{align*}
\dim E& = \dim\Ext^m_G(L(\lambda),L(\mu)\otimes \sG_{\sigma,b})\\
& \leq s(\Phi,m)\max_{\zeta,\nu\in \pi_{b-1}}\{ \dim\Ext^m_G(L(\lambda),L(\mu)\otimes L(\zeta)\otimes L(\nu^\star )^{(\sigma)})\}\\
& \leq s(\Phi,m) f(\Phi,m,e) \max_{\nu\in \pi_{b-1},\xi\in X_{e'}}\{\dim\Ext^m_G(L(\lambda)\otimes L(\nu)^{(\sigma)},L(\xi))\} \\
& \leq s(\Phi,m) f(\Phi,m,e) c(\Phi,m, e').
 \end{align*}
Since $e'$ is a function of $e$, $m$ and $\Phi$, we can take $D(\Phi,m,e)=s(\Phi,m)f(\Phi,m,e)c(\Phi,m,e')$, proving
the first assertion of the theorem.  For the final conclusion, take $\mu=0$ and replace $\lambda$ by $\lambda^\star$. 

This concludes the proof of Theorem
 \ref{thm:finitebound}. \qed

\medskip
\begin{remark} (a) The easier bounding of the integers $\dim\opH^m(G(\sigma),L(\lambda))$ over all $p,r,\lambda$ does
not require Lemma \ref{compFacts}. However, it does still require the established Lusztig
character formula for large $p$ (even though the final result holds for all $p$)
since the proofs in \cite{PS11}
{\it do} require the validity of the Lusztig character formula for $p$ large.

 (b) Following work of Parshall and Scott \cite{PS12}, but using finite groups of Lie type in place of their algebraic group counterparts one can investigate the following question. 
For a given root system $\Phi$ and non-negative integer $m$, let $D(\Phi,m)$ be the least upper bound of the integers $\dim \opH^m(G(\sigma),L(\lambda))$ over $\sigma$ and
all $\sigma$-restricted dominant weights $\lambda$. Then one can ask for the rate of growth of the
sequence $\{D(\Phi,m)\}$.   In the rank 1 case (i.e., $SL_2$), it is known  from results of Stewart \cite{Ste12} that the growth
rate can be exponential even in the rational cohomology case. However, the corresponding question remains open for higher ranks.

(c) One could ask if the condition on $e$ in the theorem is necessary to bound the dimension of the $\Ext^m$-groups for $i\geq 2$. Bendel, Nakano and Pillen \cite[Thm.~5.6]{BNP06} show that one can 
drop the condition in case $m=1$ (see also \cite[Cor. 5.3]{PS11}). However, in \cite[Thm.~1]{Ste12} a sequence of irreducible modules $\{L_r\}$ was given for any simple group $G$ for $p$ sufficiently 
large showing that $\dim\Ext^2_G(L_r,L_r)\geq r-1$. One can see the same examples work at least for all finite Chevalley groups. This demonstrates that the condition on $e$ is necessary in the above theorem also. 
\end{remark}

%%%%%%%%%%
%%Section 5 - Bounds for Frobenius kernels
%%%%%%%%%%%

\section{Bounding cohomology of Frobenius kernels}\label{S:FrobeniusKernels} 

This section proves Theorem \ref{thm:boundforFrobeniusKernel}, an analogue
of Theorem \ref{thm:finitebound} for Frobenius kernels.  The result is stated in the 
general context of $G_{\sigma}$ for a surjective endomorphism $\sigma: G \to G$.  Recall from the 
discussion in \S2.3 that $G_{\sigma}$ is either an ordinary Frobenius kernel $G_r$ (for a non-negative integer $r$), or $G_{r/2}$ for an odd positive integer $r$, in the cases of the Ree and Suzuki groups. 

\subsection{Induction from infinitesimal subgroups} Analogous to the previous use of the induction functor 
$\ind_{G(\sigma)}^{G} -$, we consider the induction functor $\text{ind}_{G_{\sigma}}^{G} -$. 
This functor is exact since $G/G_{\sigma}$ is affine. When this functor is applied to the trivial module, there are the following identifications of $G$-modules: 
$$\text{ind}_{G_{\sigma}}^{G}k\cong k[G/G_{\sigma}]\cong k[G]^{(\sigma)},$$ 
where the action of $G$ on the right hand side is via the left regular representation
(twisted).  
As noted in Lemma \ref{filtration}, $k[G]$ as a $G\times G$-bimodule (with the left and right
regular representations respectively) has a filtration with sections of the form $\nopH^{0}(\nu)\otimes \nopH^{0}(\nu^\star)$, $\nu\in X^+$ with 
each $\nu$ occurring precisely once. Note that this is an exterior tensor product
with each copy of $G$ acting naturally on the respective induced modules and trivially
on the other.  Hence, $k[G]^{(\sigma)}$ has a $G\times G$-filtration with sections $\nopH^{0}(\nu)^{(\sigma)}\otimes \nopH^{0}(\nu^\star )^{(\sigma)}$.  
By restricting the action of $G \times G$ on  $k[G]^{(\sigma)}$ to the first (left hand) $G$-factor,
we conclude that $k[G]^{(\sigma)}$ with the (twisted) left regular action, and hence $\ind_{G_{\sigma}}^Gk$, admits a filtration with sections of the form 
$(\nopH^0(\nu)^{(\sigma)})^{\oplus \dim\nopH^0(\nu)}$.

We can now apply generalized Frobenius reciprocity and this fact to obtain the following inequality: 
\begin{eqnarray}\label{E:firstdimbound} 
\dim\text{Ext}^{m}_{G_{\sigma}}(L(\lambda),L(\mu))&=& \dim \text{Ext}^{m}_{G}(L(\lambda),L(\mu)\otimes \text{ind}_{G_{\sigma}}^{G} k) \notag\\
&\leq& \sum_{\nu\in X^+}\dim\Ext^m_G(L(\la),L(\mu)\otimes(\nopH^0(\nu)^{(\sigma)})^{\oplus \dim\nopH^0(\nu)})\\
&\leq& \sum_{\nu\in X^+} \dim \text{Ext}_{G}^{m}(L(\lambda),L(\mu)\otimes \nopH^{0}(\nu)^{(\sigma)})\cdot \dim \nopH^{0}(\nu) \nonumber.
\end{eqnarray}

%%%%%%%
%%Section 5.2
%%%%%%%%%

\subsection{Proof of Theorem \ref{thm:boundforFrobeniusKernel}}
Letting $s(m)$ be as in Theorem~\ref{thm:Gr-weightbound}, form the finite set
$$X(\Phi,m):=\{\tau\in X^+:\ \langle \tau,\alpha_{0}^{\vee} \rangle < s(m)\}$$
of dominant weights which depends only on $\Phi$ and $m$.  Necessarily,
$X(\Phi,m)$ is a saturated subset (i.~e.,an ideal) of $X^+$.
 
 Let $\lambda\in X_e\subseteq X_\sigma$ and $\mu\in X_\sigma$. If $\text{Ext}^{m}_{G}(L(\lambda),L(\mu)\otimes \nopH^{0}(\nu)^{(\sigma)})\neq 0$, 
the Hochschild-Serre spectral sequence
$$E_{2}^{i,j}=\text{Ext}^{i}_{G/G_{\sigma}}(V(\nu^{\star})^{(\sigma)},\text{Ext}^{j}_{G_{\sigma}}(L(\lambda),L(\mu)))\Rightarrow \text{Ext}^{i+j=m}_{G}(L(\lambda),L(\mu)\otimes \nopH^{0}(\nu)^{(\sigma)})$$
implies there exists $i,j$ such that $i+j=m$ and 
$$\text{Ext}^{i}_{G/G_{\sigma}}(V(\nu^{\star})^{(\sigma)},\text{Ext}_{G_{\sigma}}^{j}(L(\lambda),L(\mu)))\neq 0.$$ 
By Theorem~\ref{thm:Gr-weightbound}, if $[\Ext^j_{G_\sigma}(L(\lambda),L(\mu))^{(-\sigma)}:L(\gamma)]\not=0$,
 then
$\gamma\in\pi_{s(j)}$, and so  $\langle \gamma,\alpha_{0}^{\vee} \rangle < s(j) \leq s(m).$ However,
 if $\text{Ext}^{i}_{G/G_{\sigma}}(V(\nu^{\star})^{(\sigma)},L(\gamma)^{(\sigma)})\cong 
\text{Ext}^{i}_{G}(V(\nu^{\star}),L(\gamma))\neq 0$ then 
$\nu^\star\leq \gamma$. Thus, $\nu^{\star}\in X(\Phi,m)$ and so $\nu\in X(\Phi,m)$. 

The inequality (\ref{E:firstdimbound}) and Theorem~\ref{extalg} now give 
\begin{align*} 
\dim\Ext^{m}_{G_{\sigma}}&(L(\lambda),L(\mu)) \\
&\leq \sum_{\nu\in X(\Phi,m)} \dim \Ext_{G}^{m}(L(\lambda),L(\mu)\otimes \opH^{0}(\nu)^{(\sigma)})\cdot \dim \nopH^{0}(\nu) \qquad (\text{by }(\ref{E:firstdimbound}))\\
&\leq \sum_{\nu\in X(\Phi,m)} \sum_{\tau\in X^+} \dim \text{Ext}_{G}^{m}(L(\lambda),L(\mu)\otimes L(\tau)^{(\sigma)})\cdot [\nopH^{0}(\nu):L(\tau)]\cdot \dim \nopH^{0}(\nu) \\
&\leq c(\Phi,m,e) \sum_{\nu\in X(\Phi,m)} \sum_{\tau\in X^+}  [\nopH^{0}(\nu):L(\tau)]\cdot \dim \nopH^{0}(\nu) \qquad (\text{by Theorem \ref{extalg}})\\
&\leq c(\Phi,m,e)\sum_{\nu\in X(\Phi,m)} (\dim \nopH^{0}(\nu))^{2}.
\end{align*} 

Since $|X(\Phi,m)|<\infty$ and the numbers $\dim\nopH^0(\nu)$ are given by
Weyl's dimension formula, the first claim of the theorem is proved, putting
\[ E(\Phi,m,e):=c(\Phi,m,e)\sum_{\nu\in X(\Phi,m)} (\dim \nopH^{0}(\nu))^{2}.\]
 For the second claim, set $\mu = 0$ and replace $\la$ with $\la^\star $. Then, in the above argument, apply Theorem 
\ref{extalg} and replace $c(\Phi,m,e)$ with $c(\Phi,m,0)$. 
Similarly, for the last claim, apply Theorem \ref{extalgm=1} to replace
$c(\Phi,1,e)$ by $c(\Phi)$.  \qed

%%%%%%%%
%%%Section 5.3 - Examples
%%%%%%%%%

\subsection{Examples} \label{examples:boundforFrobenius}We will illustrate Theorem \ref{thm:boundforFrobeniusKernel} with some examples for ordinary Frobenius kernels $G_r$.  First,  the theorem says that the dimension of $G_r$-cohomology groups (in some fixed degree) of irreducible modules 
can be bounded independently of $r$.  In low degrees, one can explicitly see that the dimension
of the cohomology of the trivial module is independent of $r$.  On the other hand, in
degree 2, one sees that the dimension is clearly dependent on the root system.

\begin{example}\label{Example-trivialmodule} Assume that the Lie algebra ${\mathfrak g}$ of 
$G$ is simple (or assume that $p \neq 2, 3$ for certain root systems). Then 
$\operatorname{H}^{1}(G_{r},k)=0$ for all $r\geq 1$ (cf. \cite{HHA84}). Furthermore, 
$\operatorname{H}^{2}(G_{r},k)\cong \operatorname{Ext}^{2}_{G_{r}}(k,k)\cong 
({\mathfrak g}^*)^{(r)}$ for all $r \geq 1$ (cf. \cite{BNP07}).  
\end{example} 

On the other hand, the following example demonstrates that the dimension of $\Ext$-groups
between arbitrary irreducible modules (as in the first part) of the theorem cannot be bounded by
a constant independent of $r$.  In particular, one can have $\Ext$-groups of arbitrarily 
high dimension.

\begin{example}\label{ExampleforH2} Let $G=SL_2$ with let $p>2$. Set $\lambda=1+p+p^{2}+\cdots +p^{r}$, and let $L(\lambda)=L(1)\otimes L(1)^{(1)}\otimes L(1)^{(2)}\otimes \dots \otimes L(1)^{(r)}$. 
From \cite[Thm.~1]{Ste12}, we have $\dim\Ext^2_G(L(\lambda),L(\lambda))=r$. Assume that $s\geq r$.  Applying the Hochschild-Serre spectral sequence to $G_s \unlhd G$ and fact that the $E_{2}$-term is a 
subquotient of the cohomology, 
we see that 
\begin{align*}
r=\dim\Ext^2_G(L(\lambda),L(\lambda))\leq &\dim\Ext^2_G(k,\Hom_{G_s}(L(\lambda),L(\lambda))^{(-s)})\\&
+\dim\Ext^1_G(k,\Ext^1_{G_s}(L(\lambda),L(\lambda))^{(-s)})\\
&+\dim\Hom_G(k,\Ext^2_{G_s}(L(\lambda),L(\lambda))^{(-s)}).
\end{align*}
But $\Hom_{G_s}(L(\lambda),L(\lambda))^{[-s]}\cong k$ and so the first term on the right hand side is 0 by \cite[II.4.14]{Jantzen}. 
Also, \cite[Thm.~4.5]{HHA84} yields that  $\Ext^1_{G_s}(L(\lambda),L(\lambda))^{(-s)}=0$, so that the second term on the right hand side is also 0. 
Thus \[r\leq\dim \Hom_G(k,\Ext^2_{G_s}(L(\lambda),L(\lambda))^{(-s)})\leq \dim \Ext^2_{G_s}(L(\lambda),L(\lambda)).\] 
\end{example} 

Returning to the case of cohomology, the following example suggests
how the dimension of $\opH^1(G_r,L(\la))$ may depend on the root system.  
Theorem \ref{thm:lowerboundforH1} in the following subsection will expand on this.  

\begin{example} \label{A_n-example} Let $G=SL_{n+1}$ with $\Phi$ of type $A_{n}$. 
We will assume that $p>n+1$ so that $0$ is a 
regular weight. Consider the dominant weights of the form 
$\lambda_{j}=p^r\omega_{j}-p^{r-1}\alpha_{j}$ where $j=1,2,\dots,n$. According to 
\cite[Thm.~3.1]{BNP04-Frob} these are the minimal dominant weights $\nu$ such that $\text{H}^{1}(G_{r},\nopH^{0}(\nu))\neq 0$. Furthermore, 
$\text{H}^{1}(G_{r},\nopH^{0}(\lambda_{j}))\cong L(\omega_{j})^{(r)}$ for each $j$. 

Since $p>n+1$ the weights $\lambda_{j}$ are not in the root lattice and cannot be linked under the action of the affine Weyl group to $0$, thus 
any $W_{p}$-conjugate to $\lambda_{j}$ (under the dot action) cannot be linked to $0$. It follows that if $\mu\in X^+$ and $\mu \uparrow \lambda_{j}$ then 
$\text{H}^{1}(G_{r},L(\mu))=0$. This can be seen by using induction on the ordering of the weights and the long exact sequence 
induced from the short exact sequence $0\rightarrow L(\mu)\rightarrow \nopH^{0}(\mu)\rightarrow N\rightarrow 0$. Note that $N$ has composition 
factors which are strongly linked and less than $\mu$. Moreover $N$ has no trivial $G_r$-composition factors by using linkage and the fact that 
$\mu< \lambda_{j}$. 

Now consider the short exact sequence $0\rightarrow L(\lambda_{j})\rightarrow \nopH^{0}(\lambda_{j})\rightarrow M\rightarrow 0$. The long exact sequence 
and the fact that $\text{H}^{1}(G_{r},M)=0$ yields a short exact sequence of the form: 
$$0\rightarrow \text{H}^{0}(G_{r},M)\rightarrow \text{H}^{1}(G_{r},L(\lambda_{j}))\rightarrow \text{H}^{1}(G_{r},\nopH^{0}(\lambda_{j}))\rightarrow 0.$$
But as before, we have $\text{H}^{0}(G_{r},M)=0$, thus 
\begin{equation}
\operatorname{H}^{1}(G_{r},L(\lambda_{j})) \cong \opH^1(G_r,\nopH^0(\lambda_j)) \cong L(\omega_{j})^{(r)}
\end{equation}
for all $j=1,2,\dots,n$. 
\end{example}

%%%%%%%%%%%
%%Section 5.4 - Lower bound for H^1
%%%%%%%%%%%

\subsection{A lower bound on the dimension of first cohomology}\label{lowerBound} In this section we extend Example~\ref{A_n-example} by showing that the dimension of the cohomology group $\text{H}^{1}(G_{r},L(\lambda))$ cannot be universally bounded independent of the root system. This result indicates that the Guralnick Conjecture \cite{Gur86} on a universal bound
for the first cohomology of finite groups cannot hold for arbitrary finite group schemes.  

\begin{theorem}\label{thm:lowerboundforH1}  Let $G$ be a simple, simply connected algebraic group and $r$ be a non-negative integer.  The inequality 
$$\operatorname{max}\{\dim \operatorname{H}^{1}(G_{r},L(\lambda)):\ \lambda\in X_r\}\geq \dim V$$ holds,
where $V$ is the irreducible non-trivial finite dimensional $G$-module of smallest dimension. 
\end{theorem}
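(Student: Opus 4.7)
The plan is to generalize Example~\ref{A_n-example} from type $A_n$ to arbitrary $\Phi$, producing in each case an explicit weight $\lambda\in X_r$ with $\dim\opH^1(G_r,L(\lambda))\geq \dim V$. The template provided by that example shows that for $\lambda_j=p^r\varpi_j - p^{r-1}\alpha_j$ one obtains $\opH^1(G_r,L(\lambda_j))\cong L(\varpi_j)^{(r)}$, so the task reduces to choosing the index $j$ well.

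First I would consult the classification \cite[Thm.~3.1]{BNP04-Frob} of the minimal dominant weights $\nu$ for which $\opH^1(G_r,\nopH^0(\nu))\neq 0$, and in each type select one for which the associated fundamental weight $\varpi_j$ satisfies $\dim L(\varpi_j)\geq \dim V$. For the classical types ($A_n, B_n, C_n, D_n$) this is simply the natural (vector) representation; for the exceptional types one takes the corresponding minimal fundamental representation ($7$-dimensional for $G_2$, $26$-dimensional for $F_4$, $27$-dimensional for $E_6$, $56$-dimensional for $E_7$, adjoint $248$-dimensional for $E_8$). In each case, $L(\varpi_j)$ either equals or contains $V$, so $\dim L(\varpi_j)\geq \dim V$.

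Next, I would follow the argument of Example~\ref{A_n-example} verbatim: for $p$ sufficiently large relative to the Coxeter number, $\lambda_j$ is not in the root lattice and hence lies in a $W_p$-linkage class separate from that of $0$. Induction on the strong linkage ordering $\uparrow$, combined with the long exact sequence attached to $0\to L(\mu)\to \nopH^0(\mu)\to N\to 0$, then yields $\opH^1(G_r,L(\mu))=0$ for all $\mu\in X^+$ with $\mu\uparrow \lambda_j$ and $\mu<\lambda_j$. Feeding this into the short exact sequence $0\to L(\lambda_j)\to \nopH^0(\lambda_j)\to M\to 0$, together with $\opH^0(G_r,M)=0$ (again from linkage), one obtains
\[\opH^1(G_r,L(\lambda_j))\;\cong\;\opH^1(G_r,\nopH^0(\lambda_j))\;\cong\; L(\varpi_j)^{(r)},\]
whose dimension is at least $\dim V$.

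The hard part will be handling small characteristics, for which the linkage separation between $\lambda_j$ and $0$ may fail (exactly as Example~\ref{A_n-example} requires $p>n+1$ to preclude this). In those cases a finite type-by-type analysis is required: either by varying $j$ and using an alternative minimal weight from \cite[Thm.~3.1]{BNP04-Frob}, or by invoking explicit small-prime computations of $\opH^1(G_r,L(\lambda))$ available in the literature. Since only finitely many characteristics are ``small'' for each fixed $\Phi$, and since the minimal representation $V$ itself may shrink in these characteristics, the inequality can be verified uniformly after this case analysis.
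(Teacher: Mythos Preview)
Your approach is genuinely different from the paper's, and it has a real gap. The paper does \emph{not} construct an explicit $\lambda$; instead it argues by contradiction. Suppose that for every $\lambda\in X_r$ the $G/G_r$-module $\opH^1(G_r,L(\lambda))$ has only trivial composition factors. Then an induction on composition length (using the long exact sequence and $\Ext^1_G(k,k)=0$) shows that $\opH^1(G_r,M)$ has only trivial $G/G_r$-composition factors for \emph{every} finite dimensional $G$-module $M$. But \cite[Thm.~3.1]{BNP04-Frob} exhibits an $\nopH^0(\mu)$ whose $G_r$-cohomology has a non-trivial $G/G_r$-composition factor, a contradiction. Hence some $\opH^1(G_r,L(\lambda))$ contains a non-trivial irreducible $G/G_r$-module, and any such has dimension at least $\dim V$. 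This works uniformly in $p$ and $\Phi$, with no case analysis whatsoever.

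The gap in your argument is not only at small primes. The linkage step you borrow from Example~\ref{A_n-example} relies on $\lambda_j\notin Q$ to conclude that $\lambda_j$ is not $W_p$-linked to $0$. In types $E_8$, $F_4$, $G_2$ one has $X=Q$, so $\lambda_j\in Q$ for \emph{every} prime, and your sentence ``for $p$ sufficiently large \dots\ $\lambda_j$ is not in the root lattice'' is simply false there. (Similar issues arise in types $B_n$, $C_n$, $D_n$ whenever the chosen $\varpi_j$ already lies in $Q$.) Without that separation you cannot rule out trivial $G_r$-composition factors in $M=\nopH^0(\lambda_j)/L(\lambda_j)$, so the passage from $\opH^1(G_r,L(\lambda_j))$ to $\opH^1(G_r,\nopH^0(\lambda_j))$ is unjustified. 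Combined with the small-prime case you leave as a sketch, this means your program is incomplete in a way that is not merely bookkeeping. The paper's indirect argument sidesteps both difficulties entirely.
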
 

\begin{proof} Suppose that all $G/G_{r}$-composition factors of 
$\text{H}^{1}(G_r,L(\lambda))$ are trivial for all $\lambda\in X_r$. Then one could conclude that,
for any finite dimensional $G$-module $M$, the $G/G_{r}$-structure on 
$\text{H}^{1}(G_{r},M)$ is either 
a direct sum of trivial modules or $0$. This can be seen by using induction on the composition length of $M$, the long exact sequence in cohomology associated to a short
exact sequence of modules, and the fact that $\text{Ext}^{1}_{G}(k,k)=0$. 

However, by \cite[Thm.~3.1(A-C)]{BNP04-Frob}, there exist a finite dimensional 
$G$-module (of the form $H^0(\mu)$) whose $G_{r}$-cohomology has non-trivial 
$G/G_{r}$-composition factors, thus giving a contradiction.  
Therefore, there must exist a $\lambda\in X_r$ (necessarily non-zero) such 
that $\text{H}^{1}(G_{r},L(\lambda))$ has a non-trivial $G/G_{r}$-composition factor. 
\end{proof} 

Example~\ref{A_n-example} illustrates that one can realize $\dim \operatorname{H}^{1}(G_{r},L(\lambda))$  
as the dimension of a (non-trivial) minimal dimensional irreducible representation in type $A_{n}$ for some $\lambda \in X_{r}$ when $p>n+1$. An interesting question would be to explicitly realize the smallest dimensional  
non-trivial representation in general as $\operatorname{H}^{1}(G_{r},L(\lambda))$ for some $\lambda$.

%%%%%%%%%%%
%%%Section 6 - Cartan Invariants
%%%%%%%%%%%

\section{Cartan Invariants}\label{cartan}

In either the finite group or the infinitesimal group setting, the determination of Cartan
invariants---the multiplicities $[P:L]$ of irreducible modules $L$ in projective indecomposable
modules $P$ (PIMs)---is a classic 
representation theory problem.  In this section we observe that, for $G(\sigma)$ or 
$G_\sigma$, these numbers (in the defining characteristic case) can be bounded by a constant depending on the root system $\Phi$ 
and the height $r$ of $\sigma$, independently of the characteristic. In the process we will see that there is also a bound for the composition series length of $P$. Since the Ree and Suzuki groups only involve the primes 2 and 3,
 those cases can be ignored. Thus, we can assume that $G_\sigma=G_r$ for a
positive integer $r$.

\subsection{Cartan invariants for Frobenius kernels}
For $\la \in X_r$, let $Q_r(\la)$ denote the $G_r$-injective hull of $L(\la)$. 
In the category of finite dimensional $G_r$-modules, injective modules are
projective (and vice versa), and the projective indecomposable modules (PIMs) consist
precisely of the $\{Q_r(\la) : \la \in X_r\}$.

\begin{theorem}\label{thm:Cartan-Frobenius} Given a finite irreducible root system $\Phi$ and
a positive integer $r$, there is a constant $K(\Phi,r)$ with the following property. Let $G$ be a simple, simply connected  algebraic group over an algebraically closed field $k$ of positive characteristic with 
irreducible root system $\Phi$, and let $\sigma$ be a strict endomorphism of $G$ of height $r$. Then
$$
[Q_r(\la):L(\mu)|_{G_r}] \leq K(\Phi,r),
$$
for all $\la, \mu \in X_r$.
\end{theorem}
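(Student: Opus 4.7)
The approach is to bound the full $G_r$-composition length of the PIM $Q_r(\la)$, which automatically bounds every Cartan multiplicity $[Q_r(\la):L(\mu)|_{G_r}]$. First, as already observed in the preamble to the theorem, the Ree and Suzuki cases only occur at $p=2,3$, and they contribute only finitely many Cartan numbers that can be absorbed into $K(\Phi,r)$; so we reduce to $\sigma = F^r$ and $G_\sigma = G_r$. I would next split by the characteristic: fix an explicit bound $p_0 = p_0(\Phi)$ large enough that, for $p \geq p_0$, (i) the Lusztig character formula holds (AJS), and (ii) every PIM $Q_r(\la)$ with $\la \in X_r$ admits a compatible rational $G$-module lift $\hat{Q}_r(\la)$ (Jantzen, valid at least for $p \geq 2h-2$). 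For each prime $p < p_0$, the set $X_r$ is finite, so the finitely many resulting Cartan invariants can simply be absorbed into $K(\Phi,r)$.

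For $p \geq p_0$, I would realize $\hat{Q}_r(\la)$ as a direct summand of a tensor product of the form $St_r \otimes V(2(p^r-1)\rho - w_0\la)$ (or a related Weyl-type module of prescribed highest weight), and then invoke the bounding result \cite[Lem.~7.2]{PS11} to control the number of $G$-composition factors of $\hat{Q}_r(\la)$ by a constant depending only on $\Phi$ and $r$. Each such $G$-composition factor $L(\nu)$, decomposed in Steinberg form $\nu = \nu_0 + p^r \nu_1$ with $\nu_0 \in X_r$, restricts to $L(\nu_0)^{\oplus \dim L(\nu_1)}$ as a $G_r$-module. Jantzen's construction of the lift constrains the $\nu_1$ that can arise to a finite set bounded in terms of $\Phi$ and $r$ alone, so $\dim L(\nu_1)$ is uniformly bounded by Weyl's dimension formula. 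Multiplying the $G$-length bound by this per-factor bound yields a uniform bound on the $G_r$-composition length of $Q_r(\la)$, which dominates every Cartan invariant $[Q_r(\la):L(\mu)|_{G_r}]$.

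The main obstacle is demonstrating that the combinatorics governing composition factors of $\hat{Q}_r(\la)$ is genuinely $p$-independent in the large-$p$ regime. This is precisely what the Lusztig character formula provides: it converts multiplicity questions into evaluations of Kazhdan--Lusztig polynomials inside an alcove structure attached only to $\Phi$ and $r$. The split into small-prime and large-prime regimes, together with the AJS bound for the validity of Lusztig's formula, is what ultimately makes the final constant $K(\Phi, r)$ independent of the characteristic. As a byproduct, the argument simultaneously yields the stronger statement on composition factor length of PIMs alluded to as Corollary~\ref{lastcor} in Section~\ref{ss:prelim}.
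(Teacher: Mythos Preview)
Your proposal is correct and follows essentially the same route as the paper: handle small primes trivially, lift $Q_r(\la)$ to a rational $G$-module for $p\geq 2(h-1)$, bound its $G$-composition length via \cite[Lem.~7.2]{PS11}, and then bound the $G_r$-length of each $G$-factor using Steinberg's tensor product theorem together with the constraint $\mu_1 <_Q 2\rho$ (so $\dim L(\mu_1)$ is bounded by Weyl's formula). The one minor difference is that the paper does not explicitly invoke the Lusztig character formula or AJS here---it takes $p\geq 2(h-1)$ as its threshold and cites \cite[Lem.~7.2]{PS11} as a black box---so your appeal to Lusztig is at most an over-explanation of what may or may not underlie that cited lemma, and in any case does no harm since it only enlarges the small-prime range.
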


\begin{proof}
Assume that $p \geq 2(h-1)$. Then, for any $\la \in X_r$, 
$Q_r(\la)$ admits a unique rational $G$-module structure which restricts to the original $G_r$-structure
\cite[\S II 11.11]{Jantzen}. 
By \cite[Lem. 7.2]{PS11}, the number of $G$-composition factors of $Q_r(\la)$
is bounded by some constant $k(\Phi,r)$.  
The irreducible $G$-composition factors of $Q_r(\la)$ are of the form 
$L(\mu_0 + p^r\mu_1)$ with $\mu_0 \in X_r$ and $\mu_1 <_Q 2\rho$. 
As a $G_r$-module, $L(\mu_0 + p^r\mu_1) \cong L(\mu_0)\otimes L(\mu_1)^{(r)} \cong
L(\mu_0)^{\oplus\dim L(\mu_1)}$.  
Since $\mu_1 <_Q 2\rho$, the dimensions of all possible $L(\mu_1)$ are  
bounded by some number $d(\Phi)$, depending only on $\Phi$.   
Therefore, the $G_r$-composition length of $Q_r(\la)$ (for any $\la \in X_r$)
is bounded by $k(\Phi, r) \cdot d(\Phi)$. 
This number necessarily bounds all $[Q_r(\la) : L(\mu)|_{G_r}]$. 

This leaves us finitely many primes $p < 2(h-1)$. In general, we have
$[Q_r(\la) : L(\mu)|_{G_r}] \leq \dim Q_r(\la)$.  For a given root system
$\Phi$ and positive integer $r$, the $G_r$-composition length of $Q_r(\la)$ is bounded by, for instance, 
$\operatorname{max}\{\dim Q_r(\nu) : \nu \in X_r,\ p < 2(h-1)\}$. Combining these
cases gives the claimed bound $K(\Phi,r)$.
\end{proof}

\begin{remark} As noted in Section \ref{ss:prelim}, the preceding proof does {\it not} make use of any of the preceding results of this paper.   The reader may also recall that this proof is in fact {\it required} in the proof of Lemma \ref{compFacts}.  On the other hand, the proof of Theorem \ref{thm:Cartan-finite}, which is given in
the next section, {\it does} require Lemma \ref{compFacts}.
\end{remark}

%%%%%%%%%
%%Section 6.2 - Cartan Invariants for Finite Groups
%%%%%%%%%

\subsection{Cartan invariants of finite groups of Lie type;  proof of Theorem \ref{thm:Cartan-finite}}As noted above, we can assume that
$\sigma=F^r$ or $\sigma=F^r\circ\theta=\theta\circ F^r$. For the
 finite group $G(\sigma)$, the PIMs are
again in one-to-one correspondence with the irreducible modules, i.~e., simply with
the set $X_\sigma=X_r$. Let $U_\sigma(\la)$ denote the projective
cover of $L(\la)$ for $\la \in X_r$ in the category of $kG(\sigma)$-modules.  
As noted in the proof of Theorem \ref{thm:Cartan-Frobenius},
when $p \geq 2(h-1)$, each PIM $Q_r(\la)$ in the category of $G_r$-modules admits a unique $G$-structure.  Upon restriction 
to $G(\sigma)$, $Q_r(\la)$ remains injective (or equivalently projective).\footnote{This follows here by
simply observing that $Q_r(\lambda)$ is a direct summand ${\text{\rm St}}_r\otimes L(\lambda')$, where
${\text{\rm St}}_r$ is the $r$th Steinberg module and $\lambda'\in X^+$ \cite[II.11.1]{Jantzen}.}  
Hence $U_\sigma(\la)$ is a direct summand of $Q_r(\la)$.  As shown below, this allows us to 
modify the argument for Frobenius kernels to obtain an analogous result for
the $G(\sigma)$. Note that in  \cite{Pil95}, Pillen showed (in the case (I) of Chevalley groups) that the
``first'' Cartan invariant $[U_r(0) : k|_{\gfpr}]$ is independent of $p$ for large $p$.

Assume that $p \geq 2(h-1)$.  As in the proof of Theorem \ref{thm:Cartan-Frobenius},
if one can bound $[U_\sigma(\la) : L(\mu)|_{G(\sigma)}]$ in this setting, then one can deal with the 
finitely many remaining primes.  Since $U_\sigma(\la)$ is a summand of $Q_r(\la)|_{G(\sigma)}$, it suffices 
to bound $[Q_r(\la)|_{G(\sigma)} : L(\mu)|_{G(\sigma)}]$, that is, the composition multiplicity of the restriction of $L(\mu)$ 
to $G(\sigma)$ as a $G(\sigma)$-composition factor of $Q_r(\la)|_{G(\sigma)}$. To do this, we follow the argument 
in the proof of Theorem \ref{thm:Cartan-Frobenius}.

As above, the number of $G$-composition factors
of $Q_r(\la)$ is bounded by $k(\Phi,r)$ and the $G$-composition
factors have the form $L(\mu_0 + \sigma^*\mu_1) \cong L(\mu_0)\otimes L(\mu_1)^{(\sigma)}$
for $\mu_0 \in X_r$ and $\mu_1 <_Q 2\rho$.  As a $G(\sigma)$-module (as opposed
to a $G_r=G_\sigma$-module), $L(\mu_0 + \sigma^*\mu_1) \cong L(\mu_0) \otimes  L(\mu_1)$. 
By Lemma \ref{compFacts}, since $\mu_1 <_Q 2\rho$, the number of $G$-composition 
factors of $L(\mu_0) \otimes  L(\mu_1)$ is bounded by some number $f(\Phi,r)$, 
independent of $p$. (Take $f(\Phi,r)=f(e,b)$  with $e=r$ and $b=2(h-1)$ in Lemma \ref{compFacts}.) Therefore, one $G$-factor of the form $L(\mu_0 + \sigma^*\mu_1)$ could 
give rise to at most $f(\Phi,r)$ many $G(\sigma)$-sections $L(\nu)|_{G(\sigma)}$, $\nu\leq \mu_0+\mu_1
=\mu_0+\sigma^*\mu_1-(\sigma^*-1)\mu_1$. 
However, it could happen that some of the $\nu $ are not $p^r$-restricted, and we might have to iterate this process, first replacing $\nu=\nu_0+\sigma^*\nu_1$ by $\nu_0+\nu_1=\nu_0+\sigma^*\nu_1-(\sigma^*-1)\nu_1$.
 The reader may check that after at most $2(h-1)$ iterations, we get only weights that are $p^r$-restricted.
   Consequently, the 
$G(\sigma)$-length of $Q_r(\la)$ is bounded by $k(\Phi, r)\cdot f(\Phi,r)^{2
(h-1)}$, thus giving a bound on all $[Q_r(\la) : L(\mu)|_{G(\sigma)}]$, as desired. This concludes the proof of Theorem \ref{thm:Cartan-finite}.  \qed

\medskip
As a consequence of the above proofs, we have the following result.  

\begin{cor}\label{lastcor} There exists a constant $k'(\Phi,r)$ depending only on the irreducible root system and
the positive integer $r$ with the following property. If $P$ is a PIM
for $G_r$ or $G(\sigma)$ (in the defining characteristic) for a simple, simply connected algebraic
group $G$ over an algebraically closed field $k$ with root system $\Phi$, then the composition factor
length of $P$ is bounded by $k'(\Phi,r)$. Here $\sigma$ is any strict endomorphism of height $r$. 
\end{cor}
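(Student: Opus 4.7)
The plan is to observe that the arguments already presented for Theorems \ref{thm:Cartan-Frobenius} and \ref{thm:Cartan-finite} in fact produce bounds on the total composition factor length of the PIMs, not merely on the individual multiplicities $[P:L]$. Reading off this stronger conclusion from those arguments immediately yields the corollary.

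For the Frobenius kernel case, I would proceed as follows. Assume first that $p \geq 2(h-1)$. Then each $Q_r(\lambda)$ carries a unique rational $G$-structure extending its $G_r$-structure, and by \cite[Lem.~7.2]{PS11} its number of $G$-composition factors is bounded by a constant $k(\Phi,r)$. Each such $G$-factor has the form $L(\mu_0 + p^r\mu_1)$ with $\mu_0 \in X_r$ and $\mu_1 <_Q 2\rho$, and restricts to a $G_r$-module isomorphic to $L(\mu_0)^{\oplus \dim L(\mu_1)}$. Since $\mu_1 <_Q 2\rho$, Weyl's dimension formula bounds $\dim L(\mu_1) \leq \dim H^0(\mu_1)$ uniformly by a constant $d(\Phi)$. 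Thus the $G_r$-composition length of $Q_r(\lambda)$ is at most $k(\Phi,r)\cdot d(\Phi)$. For the finitely many primes $p < 2(h-1)$, one simply takes $\max\{\dim Q_r(\nu) : \nu \in X_r\}$, which bounds the composition length outright. Combining gives the required constant.

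For the finite group case, one uses that $U_\sigma(\lambda)$ is a direct summand of $Q_r(\lambda)|_{G(\sigma)}$, so it suffices to bound the $G(\sigma)$-composition length of $Q_r(\lambda)|_{G(\sigma)}$. Each rational $G$-composition factor $L(\mu_0 + \sigma^*\mu_1)$ of $Q_r(\lambda)$ restricts to $G(\sigma)$ as $L(\mu_0)\otimes L(\mu_1)$, and Lemma \ref{compFacts} controls the composition length of such tensor products (with $\mu_1 <_Q 2\rho$, hence of bounded height) by a constant $f(\Phi,r)$ independent of $p$. The $G(\sigma)$-composition factors of this tensor product are of the form $L(\nu)|_{G(\sigma)}$ for certain $\nu \in X^+$, but these $\nu$ need not be $p^r$-restricted. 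Iterating the Steinberg-tensor-product reduction at most $2(h-1)$ times brings every weight into $X_r$, yielding a $G(\sigma)$-length bounded by $k(\Phi,r)\cdot f(\Phi,r)^{2(h-1)}$. As before, the finitely many small primes are absorbed by taking a maximum.

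Setting $k'(\Phi,r)$ equal to the maximum of the two bounds above (extended over the finite set of small primes) gives the desired uniform constant. The only subtle point is ensuring that all constants entering the bound—namely $k(\Phi,r)$, $d(\Phi)$, and $f(\Phi,r)$—are genuinely characteristic-free, which is already built into \cite[Lem.~7.2]{PS11} and Lemma \ref{compFacts}; no additional obstacle arises.
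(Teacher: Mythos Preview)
Your proposal is correct and follows exactly the paper's approach: the corollary is stated as an immediate consequence of the proofs of Theorems \ref{thm:Cartan-Frobenius} and \ref{thm:Cartan-finite}, and you have accurately extracted the composition-length bounds $k(\Phi,r)\cdot d(\Phi)$ and $k(\Phi,r)\cdot f(\Phi,r)^{2(h-1)}$ that those arguments already establish.
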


Obviously, $k'(\Phi,r)$ can be used as the constant in \cite[Lem. 7.2]{PS11} bounding the $G$-composition
length there, though this  latter result  and constant (denoted $k(\Phi,r)$ above) are used in the proof of the corollary.

Finally, both the corollary and Theorem \ref{thm:Cartan-finite} show that, once the root system $\Phi$ and $r$ are fixed, the Cartan invariants of
the finite groups $G(\sigma)$ are bounded, independently of the prime $p$. This answers the 
question of Hiss stated in Question \ref{hissq} (strong version), in the special case when $H=G(\sigma)$.

\let\section=\oldsection

\bibliographystyle{amsalpha}
\bibliography{BNPPSSbib}   
%\bibliography{BNPPSSbib.bib}

  \end{document}